\newenvironment{proof}{\noindent {\bf Proof }}
{\hfill $\bullet$ \vspace{0.25cm}}
\def\P{{\mathbb P}}
\def\R{{\mathbb R}}
\def\Z{{\mathbb Z}}
\def\N{{\mathbb N}}
\def\F {{\mathcal F}}
\def\s {{\sigma}}
\def\SS{ {\cal S}^\xi}
\newtheorem{theo}{Theorem}
\newtheorem{prop}{\indent Proposition}
\newtheorem{rem}{\indent Remark}
\newtheorem{lem}{\indent Lemma}
\newtheorem{cor}{\indent Corollary}
\newtheorem{ex}{\indent Example}
\title{Infinite systems of interacting chains with memory of variable length - a stochastic model for biological neural nets}
\date{February 19, 2013}
\author{A.~Galves \and E.~L\"ocherbach}
\begin{document}

\maketitle 

\begin{center}
{\it Dedicated to Errico Presutti, frateddu e mastru}
\end{center}

\begin{abstract}
We consider a new class of non Markovian processes with a countable number of interacting components. At each time unit, each component can take two values, indicating if it has a spike or not at this precise moment. The system evolves as follows. For each component, the probability of having a spike at the next time unit depends on the entire time evolution of the system after the last spike time of the component. This class of  systems extends in a non trivial way both the interacting particle systems, which are Markovian, \nocite{Spitzer1970}  and the stochastic chains with memory of variable length which have finite state space.\nocite{rissanen}
These features make it suitable to describe the time evolution of biological neural systems. We construct a stationary version of the process by using a probabilistic tool which is a Kalikow-type decomposition either in random environment or in space-time. This construction implies uniqueness of the stationary process. 
Finally we consider the case where the interactions between components are given by a critical directed Erd\"os-R\'enyi-type random graph with a large but finite number of components. In this framework we obtain an explicit upper-bound for the correlation between successive inter-spike intervals which is compatible with previous  empirical findings.
\end{abstract}

{\it Key words} : Biological neural nets, interacting particle systems, chains of infinite memory, chains of variable length memory, Hawkes process, Kalikow-decomposition.\\

{\it AMS Classification}  : 60K35, 60G99

\section{Introduction}
A biological neural system has the following characteristics. It is a system with a huge (about $10^{11}$) number of interacting components, the neurons. 
This system evolves in time, and its time evolution is not described by a Markov process (Cessac 2011\nocite{cessac_discrete_time}). In particular, the times between successive spikes of a single neuron are not exponentially distributed (see, for instance, Brillinger 1988\nocite{Brillinger1988}). 

This is the motivation for the introduction of the class of models that we consider in the present paper. To cope with the problem of the large number of components it seems natural to consider infinite systems with a countable number of components. In this new class of stochastic systems, each component depends on a variable length portion of the history. Namely, the spiking probability of a given neuron depends on the accumulated activity of the system after its last spike time.  
This implies that the system is not Markovian. The time evolution of each single neuron looks like a stochastic chain with memory of variable length, even if the influence from the past is actually of infinite order. This class of systems represents a non trivial extension of the class of interacting particle systems introduced in 1970 by Spitzer. It is also a non trivial extension of the class of stochastic chains with memory of variable length introduced in 1983 by Rissanen\nocite{rissanen}. 

The particular type of dependence from the past considered here is motivated both by empirical as well as theoretical considerations. 

From a theoretical point of view, Cessac (2011)\nocite{cessac_discrete_time} suggested the same kind of dependence from the past. In the framework of leaky integrate and fire models, he considers a system with a finite number of membrane potential processes. The
image of this process in which only the spike times are recorded is a stochastic chain of infinite order where each neuron has to look back into the past until its last spike time. Cessac's process is a finite dimensional version of the model considered here. 

Finite systems of point processes in discrete or continuous time aiming to describe biological neural systems have a long history whose starting points are probably Hawkes (1971)\nocite{hawkes71} from a probabilistic point of view and  Brillinger (1988)\nocite{Brillinger1988} from a statistical point of view, see also the interesting paper by Krumin et al. (2010)\nocite{krumin} for a review of the statistical aspects. For non-linear Hawkes processes, but in the frame of a finite number of components, Br\'emaud and Massouli\'e (1994)\nocite{BremaudMassoulie96} address the problem of existence, uniqueness and stability. M{\o}ller and coauthors propose a perfect simulation algorithm in the linear case, see M{\o}ller and Rasmussen (2005)\nocite{MollerRasmussen}. 
In spite of the great interest in Hawkes processes during the last years, especially in association with modeling problems in finance and biology, all the studies are reduced to the case of systems with a finite number of components. Here we propose a new approach which enables us to deal also with infinite systems with a countable number of components, without any assumption of the type linearity or attractiveness.  

This paper is organized as follows. In Section \ref{section:def} we state two Theorems proving the existence and uniqueness of infinite systems of interacting chains with memory of variable length, under suitable conditions. 
Our main technical tool is a Kalikow-type decomposition of the infinite order transition probabilities which is a non trivial extension of previous results of the authors in the case of Markovian systems, cf. Galves et al. (2013)\nocite{gglo2013}. 
The decomposition considered here has two major differences with respect to what has been done before. Firstly this is due to the non-Markovian nature of the system. Secondly, and most importantly, the structure of the transition laws leads to the need of either considering a decomposition depending on a random environment or considering a space-time decomposition. Using the Kalikow-type decomposition we prove the existence, the uniqueness as well as a property of loss of memory of the stationary process. 

In Section \ref{sec:stat} we study the correlation between successive inter-spike intervals (ISI). This aims at explaining empirical results presented in the neuroscientific literature. Gerstner and Kistler (2002)\nocite{Gerstner}, quoting Goldberg et al. (1964)\nocite{Goldberg}, observe that in many experimental setups the empirical correlation between successive inter-spike intervals is very small ``indicating that a description of spiking as a stationary renewal process is a good approximation".  However, Nawrot et al. (2007) find statistical evidence that neighboring inter-spike intervals are correlated, having negative correlation. We show that we can account for these apparently contradictory facts within our model. This requires the introduction of a new setup in which the synaptic weights define a critical directed Erd\"os-R\'enyi random graph with a large but finite number of components. We obtain in Theorem \ref{theo:4} an explicit upper bound for the correlations involving the number of components of the system, as well as the typical length of one inter-spike interval. For a system having a large number of components, our result is compatible with the discussion in Gerstner and Kistler (2002). Gerstner and Kistler (2002) deduce from this that spiking can be described by a renewal process. However, for systems with a small number of components, the correlation might as well be quite big, as reported by Nawrot et al. (2007) who show that neighboring inter-spike intervals are negatively correlated. Therefore, both features are captured by our model, depending on the scale we are working in. 

The proofs of all the results are presented in Sections \ref{sec:3}, \ref{sec:4} and \ref{sec:proofstat}.

\section{Systems of interacting chains with memory of variable length: Existence, uniqueness and loss of memory}\label{section:def}
We consider a stochastic chain $(X_t  )_{ t \in \Z }$ taking values in $ \{ 0, 1 \}^I $ for some countable set of neurons $I ,$ defined on a suitable probability space $ ( \Omega , { \cal A} , P ) .$ For each neuron $i$ at each time $t \in \Z,$  $ X_t (i) = 1 $ if neuron $i$ has a spike at that time $t$, and $X_t(i) = 0 $ otherwise. The global configuration of neurons at time $t$ is denoted $X_t = (X_t (i) , i \in I ) .$ We define the filtration
$$ \F_t = \s ( X_s ,  s \in \Z, s \le t   ) , t \in \Z .$$ 
For each neuron $i \in I$ and each time $t \in \Z $ let
\begin{equation}
L_t^i = \sup \{ s < t  : X_s (i) = 1  \} 
\end{equation} 
be the last spike time of neuron $i$ strictly before time $t.$ We introduce a family of ``synaptic" weights $W_{j \to i} \in  \R , $ for $j \neq i ,$ $W_{j \to j } = 0 $ for all $j.$  $W_{j\to i}$ is the ``synaptic weight of neuron $j$ on neuron $i$". We suppose that the synaptic weights have the following property of uniform summability. \begin{equation}\label{eq:summable}
\sup_{ i \in I} \sum_j |W_{j \to i }| < \infty .
\end{equation}

Now we are ready to introduce the dynamics of our process. At each time $t ,$ conditionally on the whole past, sites update independently. This means that for any finite subset $J \subset I ,$ $a_i \in \{ 0, 1 \} , i \in J, $ we have
\begin{equation}\label{eq:dyn}
P ( X_t (i ) = a_i , i \in J | \F_{t-1} ) = \prod_{i \in J} P ( X_t (i ) = a_i | \F_{t-1}).
\end{equation}
Moreover, the probability of having a spike in neuron $i$ at time $t$ is given by 
\begin{equation}\label{eq:transition2}
 P( X_t (i ) = 1 | { \cal F}_{t- 1}  ) = \phi_i  \left( \sum_j  W_{ j \to i} \sum_{ s = L_t^i}^{ t- 1} g_j (t-s) X_s (j) , t - L_t^i    \right) ,
\end{equation}
where $\phi_i : \R \times \N   \to [ 0, 1 ]$ and $ g_j : \N \to \R_+ $ are measurable functions for all $ i \in I , j \in I .$ We assume that $\phi_i$ is uniformly Lipschitz continuous, i.e. there exists a positive constant $ \gamma $ 
such that for all $ s, s' \in \R ,$ $n \in \N,$ $i \in I, $
\begin{equation}\label{eq:Lip}
| \phi_i (s, n ) - \phi_i  ( s', n  ) | \le \gamma  |s - s' |   .
\end{equation}    

Observe that in the case where the function $\phi_i $ is increasing with respect to the first coordinate, the contribution of components $j$ is either excitatory or inhibitory, depending on the sign of $W_{j \to i} .$ This is reminiscent of the situation in biological neural nets in which neurons can either stimulate or inhibit the expression of other neurons.

It is natural to ask if there exists at least one stationary chain which is consistent with the above dynamics, and if so, if this process is unique. In what follows we shall construct a probability measure $P $ on the configuration space $\Omega = \{ 0 , 1 \}^{ I \times \Z } $ of all space-time configurations of spike trains, equipped with its natural sigma algebra ${\cal A} .$ On this probability space, we consider the canonical chain $ (X_t)_{ t \in \Z } $ where for each neuron $i$ and each time $t,$ $X_t (i) ( \omega ) = \omega_t (i)  $ is the projection of $ \omega $ onto the $(i, t ) $ coordinate of $ \omega . $ 

For each neuron $i,$ we introduce
$$ {\cal V}_{\cdot \to i} = \{ j \in I, j \neq i  : W_{j \to i } \neq 0 \} ,$$
the set of all neurons that have a direct influence on neuron $i.$ Notice that in our model,  ${\cal V}_{\cdot \to i}  $ can be both finite or infinite. We fix a growing sequence $( V_i (k ) )_{ k \geq - 1 } $ of subsets of $ I $ such that $V_i ( - 1 ) = \emptyset ,$ $ V_i ( 0 ) =  \{ i \}  ,$ $ V_i (k ) \subset  V_i ( k+1) ,$ $  V_i (k ) \neq V_i ( k+1)$ if $ V_i (k ) \neq {\cal V}_{\cdot \to i}   \cup \{ i \}$ and $\bigcup_k V_i (k) ={\cal V}_{\cdot \to i}  \cup \{ i \} .$

We consider two types of systems. The first system incorporates spontaneous spike times, see Condition (\ref{eq:delta}) below. These spontaneous spikes can be interpreted as external stimulus or, alternatively, as autonomous activity of the brain. The existence and uniqueness of this class is granted in our first theorem. 

\begin{theo}\label{theo:3}[Existence and uniqueness in systems with spontaneous spikes]\\
Grant conditions (\ref{eq:summable}) and (\ref{eq:Lip}). Assume that the functions $\phi_i  $ and $g_j$ satisfy moreover the following assumptions:
\begin{enumerate}
\item
There exists $ \delta > 0 $
such that  for all $ i \in I, s \in \R  , n \in \N ,$ 
\begin{equation}\label{eq:delta}
\phi_i  ( s,n ) \geq \delta .
\end{equation}
\item 
We have that
\begin{equation}\label{eq:G}
  G (1) + \sum_{ n = 2 }^\infty ( 1 - \delta)^{ n - 2} n^2  G ( n) < \infty ,
\end{equation}
where $G  ( n) = \sup_i \sum_{ m = 1}^n  g_i (m) $ and where $ \delta $ is as in condition 1.
\item
We have fast decay of the synaptic weights, i.e.
\begin{equation}\label{eq:veryfast}
\sup_i \, \sum_{ k \geq 1 } | V_i (k ) | \left( \sum_{ j \notin V_i (k - 1 ) } |W_{j \to i }| \right) < \infty . 
\end{equation}
\end{enumerate} 
Then under these Conditions (\ref{eq:delta})--(\ref{eq:veryfast}), there exists a critical parameter $\delta_* \in ] 0, 1 [ $ such that for any $ \delta > 
 \delta_* , $ there exists a unique probability measure $P  $ on $\{ 0 , 1 \}^{ I \times \Z } ,$ under which the canonical chain satisfies (\ref{eq:dyn}) and (\ref{eq:transition2}). 
\end{theo}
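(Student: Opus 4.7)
The strategy is to establish a Kalikow-type decomposition of the transition probability in (\ref{eq:transition2}) and then use this decomposition as the backbone of a graphical construction of the process, resembling a perfect simulation. Concretely, I would try to write
$$P(X_t(i)=1 \mid \F_{t-1}) \;=\; \sum_{k=-1}^{\infty} \lambda_i^{[k]} \, p_i^{[k]}(t,\F_{t-1}),$$
with non-negative weights $\lambda_i^{[k]}$ (not depending on the past) summing to one, where the level $k=-1$ contributes a constant probability $\delta$ that is independent of the entire past, and where for $k \ge 0$ the kernel $p_i^{[k]}$ is measurable with respect only to the spikes of neurons in $V_i(k)$ during the random window $[L_t^i,t-1]$. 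Existence of such a decomposition is essentially algebraic: the spontaneous spike bound (\ref{eq:delta}) lets one peel off $\delta$ first, and (\ref{eq:Lip}) together with (\ref{eq:summable}) allows the remaining Lipschitz quantity $\phi_i(\cdot,t-L_t^i)-\delta$ to be written as a telescoping sum in $k$ with controlled local pieces, the weight $\lambda_i^{[k]}$ being governed by $\sum_{j \notin V_i(k-1)} |W_{j\to i}|$. The critical point is that the memory depth is itself random through $L_t^i$, which forces the decomposition to be space--time rather than purely spatial.

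The construction is then the standard one. At each site $(i,t) \in I \times \Z$, draw independently a level $K_{i,t}$ with law $(\lambda_i^{[k]})_k$ and an independent uniform $U_{i,t}$ on $[0,1]$. If $K_{i,t}=-1$, set $X_t(i)=\one_{\{U_{i,t}\le \delta\}}$; if $K_{i,t}=k\ge 0$, declare that $X_t(i)$ is a function of $U_{i,t}$ and the spikes $\{X_s(j): j \in V_i(k),\; L_t^i \le s \le t-1\}$ through $p_i^{[k]}$. This creates an ancestor tree rooted at $(i,t)$: one explores back in time through finite neighborhoods, stopping at sites with $K=-1$. The whole construction is well-defined as soon as every such tree is a.s.\ finite, and once this is achieved, the canonical chain $X$ is obtained as a measurable function of the auxiliary randomness and is automatically stationary and consistent with (\ref{eq:dyn})--(\ref{eq:transition2}).

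The main obstacle, and the place where $\delta_*$ enters, is proving almost-sure finiteness of the ancestor tree. Here two independent difficulties coexist: the spatial neighborhoods $V_i(k)$ can be large, and the look-back window $[L_t^i,t-1]$ is random and a priori unbounded. I would control the window using (\ref{eq:delta}): at every time step, independently of the past, a spontaneous spike occurs with probability $\ge \delta$, so $t-L_t^i$ is stochastically dominated by a geometric variable of parameter $\delta$, with all moments. Combining this with (\ref{eq:G}) to control the number of past instants relevant through the kernel $g_j$, and with (\ref{eq:veryfast}) to bound the expected size of $V_i(K_{i,t})$ through the weights $\lambda_i^{[k]}$, one bounds the expected number of offspring per ancestor by a quantity that tends to $0$ as $\delta \uparrow 1$. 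Defining $\delta_*$ as the infimum of $\delta$ for which this expected progeny is strictly less than one, a standard subcritical branching process comparison shows that for $\delta>\delta_*$ the tree is finite a.s. and has bounded moments; the factor $n^2(1-\delta)^{n-2}$ in (\ref{eq:G}) is precisely what is needed to couple the geometric tail of $t-L_t^i$ with the range of $g$.

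Uniqueness then follows by a coupling argument: any two stationary measures consistent with (\ref{eq:dyn}) and (\ref{eq:transition2}) can be realized on a common probability space with the same auxiliary variables $(K_{i,t},U_{i,t})$, and then they must assign the same value to $X_t(i)$ for every $(i,t)$, because that value is a deterministic function of the auxiliary randomness on the (a.s.\ finite) ancestor tree. The expected hard part is not the decomposition itself nor the coupling, but locating the right quantitative bound on the progeny of a single ancestor, since the two sources of exploration --- spatial spreading through $V_i(k)$ and temporal spreading through $L_t^i$ --- interact in a delicate way and must be balanced to identify the critical $\delta_*$.
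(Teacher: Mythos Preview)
Your overall strategy---Kalikow decomposition, ancestor-tree construction, subcritical branching comparison---is exactly the paper's, and your reading of condition (\ref{eq:G}) as the place where the geometric tail of the look-back window meets the range of $g$ is correct. But there is a genuine gap in the decomposition step. You ask for weights $\lambda_i^{[k]}$ \emph{not depending on the past}, with the $k$th kernel measurable in $V_i(k)\times[L_t^i,t-1]$. In the generality of Theorem \ref{theo:3} such a decomposition need not exist: the specification can be discontinuous (Remark \ref{rem:1}, e.g.\ $g_j\equiv 1$), and then the Kalikow masses $\alpha_i(k)=1-\sup_x[\sup_z p_{(i,t)}(1|z)-\inf_z p_{(i,t)}(1|z)]$ (sup/inf over $z$ agreeing with $x$ on $V_i(k)$) do \emph{not} converge to $1$, because by taking $t-L_t^i(x)$ large the oscillation stays bounded away from $0$ for every finite $k$. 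Concretely, your claimed control ``$\lambda_i^{[k]}$ governed by $\sum_{j\notin V_i(k-1)}|W_{j\to i}|$'' is missing a factor $G(t-L_t^i)$, which is unbounded precisely in the regime the theorem is meant to cover. A related symptom: your ancestor tree is circular, since knowing the window $[L_t^i,t-1]$ already requires knowing the process you are constructing.

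The paper's missing ingredient is to sample the i.i.d.\ Bernoulli($\delta$) field $\xi$ of spontaneous spikes \emph{first}, restrict to configurations in ${\cal S}^\xi$, and only then perform the Kalikow decomposition \emph{conditionally on} $\xi$. The weights $\lambda_{(i,t)}^\xi(k)$ now depend on $\xi$ through the last spontaneous spike time $R_t^i\le L_t^i$, satisfy the bound $\lambda_{(i,t)}^\xi(k)\le\gamma\,G(t-R_t^i)\sum_{j\notin V_i(k-1)}|W_{j\to i}|$, and sum to $1$ for each realization of $\xi$. The ancestor sets $C_n^{(i,t)}$ are built using the \emph{a priori} window $[R_t^i,t-1]$---known from $\xi$ before $X$ is constructed---so there is no circularity. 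The branching comparison is then genuinely in random environment: one bounds the quenched reproduction mean as a function of $t-R_t^i$, and afterwards averages over $\xi$ using the geometric law of $t-R_t^i$; this two-step order is what produces the sum $\sum_n (1-\delta)^{n-2}n^2 G(n)$ and hence the critical $\delta_*$. Your sketch has the pieces but in the wrong order: you try to decompose first and control the window afterwards, whereas the window must be fixed (via $\xi$) before the decomposition can even be written down.
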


\begin{rem}\label{rem:1}
The stochastic chain $ (X_t)_{t \in \Z}$ introduced in Theorem \ref{theo:3} is a chain having memory of infinite order (cf. Doeblin and Fortet 1937\nocite{doefor37}, Harris 1955\nocite{har55}, Berbee 1987, Bressaud, Fern\'andez and Galves 1999, Johannson and \"Oberg 2003 and Fern\'andez and Maillard 2004\nocite{fern_maillard}). The setup we consider here extends what has been done in the above cited papers. First of all, the chain we consider takes values in the infinite state space $ \{ 0, 1 \}^I .$ Moreover, in Theorem \ref{theo:3} no summability assumption is imposed on the functions $g_j .$ In particular, the choice $ g_j (t) \equiv 1 $ is possible. This implies that the specification of the chain is not continuous. More precisely, introducing 
$$ p_{(i,t)} ( 1 | x ) = \phi_i \left( \sum_{ j } W_{ j \to i } \sum_{ s = L_t^i ( x) }^{t- 1 } g_j ( t-s ) x_j ( j) , t - L_t^i \right), $$ 
where $ L_t^i (x) = \sup \{ s < t : x_s (i ) = 1 \} ,$
we have that
$$ \sup_{ x , y \; : x= y \; \mbox{\small on } V_i ( k) \times [ t - k , t -  1 ] } | p_{(i,t)} ( 1 | x ) - p_{(i,t)} ( 1 | y  ) | \not\to 0 \mbox{ as } k \to \infty  $$
in the case $ g_j (t) \equiv 1 $ for all $j,$ which can be seen by taking configurations $x$ and $y$ such that $ L_t^i (x) < t - k  $ and $ L_t^i (y ) < t-k .$ 
A similar type of discontinuity has been considered in Gallo (2011)\nocite{Gallo2011} for stochastic chains with memory of variable length taking values in a finite alphabet. 
\end{rem}

As an illustration of Theorem \ref{theo:3} we give the following example of a system with interactions of infinite range. 

\begin{ex}
We give an example of a system satisfying the assumptions of Theorem \ref{theo:3}. Take $ I = \Z^d ,$ $g_j ( s) = 1$ for all $j, s,$ and $ W _{ i \to j } = \frac{ 1 }{ \|j- i\|_1^{2 d + \alpha} }$ for some fixed $\alpha > 1,$ where $\| \cdot \|_1$ is the $L^1-$norm of $\Z^d .$  In this case, if we choose $ V_i (k ) = \{ j \in \Z^d = \| j - i \|_1 \le k \}, $ we have $ | V_i (k ) | = (k + 1)^d , $ and condition (\ref{eq:veryfast}) is satisfied, since
\begin{multline*}
\sum_{ k \geq 1 } | V_i (k ) | \left( \sum_{ j \notin V_i (k - 1 ) } |W_{j \to i }| \right) =  \sum_{ k \geq 1 }  ( k + 1 )^d \sum_{ l = k }^\infty  card \{ j : \| j - i \|_1 = l \} \, \frac{1}{l^{2 d +\alpha}} \\
\le C(d) \sum_{ k \geq 1 }  ( k + 1 )^d \sum_{ l = k }^\infty \frac{ l^{d-1}}{l^{2 d +\alpha}}
\le \frac{C(d)}{d+ \alpha} \sum_{ k \geq 1 } \frac{( k + 1 )^d }{(k-1)^{ d + \alpha }} < \infty ,
\end{multline*}
as $ \alpha  > 1 .$ 
\end{ex}

The next theorem deals with the second type of system. Now we don't assume a minimal spiking rate. But additionally to the fast decay of the synaptic weights we also assume a sufficiently fast decay of the aging factor $g_j , $ see Condition (\ref{eq:veryveryfast}) below. This additional assumption implies that the specification of the chain is continuous. This is the main difference with the setup of Theorem \ref{theo:3}.

\begin{theo}\label{theo:2}[Existence and uniqueness in systems with uniformly summable memory]
Suppose that $\phi_i ( s,n)  = \phi_i (s)  $ does not depend on $n.$ Assume conditions (\ref{eq:summable}) and (\ref{eq:Lip}) and suppose moreover that 
\begin{equation}\label{eq:veryveryfast}
\sup_i \, \sum_{ k \geq 0 }( k + 1) \cdot | V_i (k ) | \left( \sum_{ j \notin V_i (k - 1 ) } |W_{j \to i }| \sum_{n =  1  }^\infty g_j (n) +\sum_{ j \in V_i ( k- 1 ) } |W_{j \to i }| \sum_{n = k \vee 1  }^\infty g_j (n) \right) < \frac{1}{ \gamma }  ,
\end{equation}
where $\gamma $ is given in (\ref{eq:Lip}).

Then there exists a unique probability measure $P $ on $\{ 0 , 1 \}^{ I \times \Z } $ such that under $P ,$ the canonical chain satisfies (\ref{eq:dyn}) and (\ref{eq:transition2}). 
\end{theo}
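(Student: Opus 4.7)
The plan is to adapt the Kalikow-type decomposition used in Theorem \ref{theo:3} to the continuous-specification regime. Since $\phi_i(s,n)=\phi_i(s)$ and (\ref{eq:veryveryfast}) imposes a joint summability on the synaptic weights and the aging factors, the transition kernel $p_{(i,t)}(1|x)$ depends continuously on $x$ and is well approximated by cylinder functions of a finite space-time window $V_i(k-1)\times\{t-k,\ldots,t-1\}$. The strategy is to turn this continuity into a probabilistic convex decomposition whose associated backward branching construction is subcritical precisely under (\ref{eq:veryveryfast}).

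\textbf{Step 1 (finite-range approximation).} For each $k\ge 0$ and each $(i,t)$ I define a truncated kernel $p^{[k]}_{(i,t)}(1|\cdot)$ depending on $x$ only through $x|_{V_i(k-1)\times\{t-k,\ldots,t-1\}}$, obtained by replacing contributions from outside this window by a fixed reference value (and interpreting $L_t^i(x)$ as $t-k$ when no spike is seen in the window). Using (\ref{eq:Lip}) I show
$$|p_{(i,t)}(1|x)-p^{[k]}_{(i,t)}(1|x)|\le \gamma\,\alpha_k^{(i)},\qquad \alpha_k^{(i)}:=\sum_{j\notin V_i(k-1)}|W_{j\to i}|\sum_{n=1}^{\infty}g_j(n)+\sum_{j\in V_i(k-1)}|W_{j\to i}|\sum_{n=k\vee 1}^{\infty}g_j(n),$$
which is exactly the bracket appearing in (\ref{eq:veryveryfast}).

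\textbf{Step 2 (Kalikow decomposition).} From Step 1 I build a telescopic convex decomposition
$$p_{(i,t)}(1|x)=\sum_{k=0}^{\infty}\lambda_k^{(i,t)}\,q^{[k]}_{(i,t)}\bigl(1\,\big|\,x|_{V_i(k-1)\times[t-k,t-1]}\bigr),$$
with $\lambda_k^{(i,t)}\ge 0$, $\sum_k\lambda_k^{(i,t)}=1$, and $\lambda_k^{(i,t)}$ of order $\gamma(\alpha_{k-1}^{(i)}-\alpha_k^{(i)})$; this is an entirely mechanical consequence of Step 1 once the reference kernels are fixed.

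\textbf{Step 3 (subcritical backward branching).} I construct the stationary chain by attaching to each space-time point $(i,t)$ an independent range $K_{(i,t)}\sim(\lambda_k^{(i,t)})_{k\ge0}$; the ancestors of $(i,t)$ are then at most $(k+1)|V_i(k)|$ sites in $V_i(K_{(i,t)})\times\{t-K_{(i,t)},\ldots,t-1\}$. The mean number of ancestors is $\sum_k(k+1)|V_i(k)|\lambda_k^{(i,t)}$, which after Abel summation is bounded by $\gamma\sup_i\sum_k(k+1)|V_i(k)|\alpha_k^{(i)}<1$ thanks to (\ref{eq:veryveryfast}). Hence the ancestral tree rooted at $(i,t)$ is a.s.\ finite.

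\textbf{Step 4 (existence and uniqueness).} On the event that the backward tree terminates, the value of $X_t(i)$ is an explicit deterministic function of the i.i.d.\ randomness attached to the vertices of the tree, and the product structure (\ref{eq:dyn}) holds by independence of the attached variables across distinct $(i,t)$. Uniqueness of the stationary law follows because any measure consistent with (\ref{eq:dyn}) and (\ref{eq:transition2}) must agree with this explicit construction on every finite-dimensional cylinder: iterating the Kalikow decomposition gives a coupling whose disagreement probability is controlled by the same subcritical branching and hence vanishes.

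The main technical obstacle is Step 1: the sum $\sum_{s=L_t^i}^{t-1}g_j(t-s)x_s(j)$ couples spatial truncation ($j\notin V_i(k-1)$, where the entire temporal tail of $g_j$ contributes) with temporal truncation ($j\in V_i(k-1)$, where only the tail of $g_j$ beyond lag $k$ matters), and both terms must be carefully matched to reproduce the two pieces of $\alpha_k^{(i)}$. Once this bookkeeping is done, the factor $(k+1)|V_i(k)|$ in (\ref{eq:veryveryfast}) is precisely the subcriticality threshold coming from Step 3, and the constant $\gamma$ is precisely the Lipschitz constant used in Step 1, so the condition in (\ref{eq:veryveryfast}) is tight for the method.
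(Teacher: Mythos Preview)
Your proposal is correct and follows essentially the same route as the paper: a deterministic space-time Kalikow decomposition over windows $V_i(k)\times[t-k-1,t-1]$, the oscillation bound $\lambda_i(k)\le\gamma\alpha_k^{(i)}$ coming from (\ref{eq:Lip}), and a subcritical branching comparison for the ancestor sets, with (\ref{eq:veryveryfast}) being exactly the condition $\sup_i m_i<1$ for the mean offspring $m_i=\sum_{k\ge0}(k+1)|V_i(k)|\lambda_i(k)$.

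Two small points. First, there is an indexing mismatch between your Steps 1--2 (window $V_i(k-1)\times[t-k,t-1]$, hence $k|V_i(k-1)|$ ancestors) and your Step 3 (you count $(k+1)|V_i(k)|$ ancestors); the paper's convention is the latter, and you should align them. Second, the paper obtains the subcriticality bound directly from $\lambda_i(k)\le\gamma\alpha_k^{(i)}$ without any Abel summation; your claim that $\lambda_k\sim\gamma(\alpha_{k-1}-\alpha_k)$ is stronger than what the inf-based construction actually yields (one only gets $\lambda_k\le 1-\alpha_i(k-1)\le\gamma\alpha_{k}^{(i)}$), and the Abel step is unnecessary. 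Also, building the decomposition from ``fixed reference values'' rather than infima does not automatically give a \emph{convex} decomposition with nonnegative weights---the paper's inf-based definitions (\ref{eq:alpha})--(\ref{eq:lambdak}) (adapted to the space-time setting) are what make Step 2 genuinely mechanical.
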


Now, for any $ s < t \in \Z, $ let $ X_s^t (i) = ( X_s (i), X_{ s+1} ( i ) , \ldots , X_t (i) ) $ the trajectory of $X(i)$ between times $s$ and $t.$ As a byproduct of the proof of Theorems \ref{theo:3} and \ref{theo:2} we obtain the following loss of memory property.

\begin{cor}\label{cor:mixing}
\begin{enumerate}
\item
Under the assumptions of either Theorem \ref{theo:3} or Theorem \ref{theo:2}, there exists a non increasing function $\varphi : \N \to \R_+ ,$
such that for any $ 0 < s < t \in \N $ the following holds. For all $i \in I, $ for all bounded measurable functions $f : \{ 0, 1 \}^{   [ s, t ] } \to \R_+ ,$
\begin{equation}\label{eq:lossofmemory}
 \big| E [ f ( X_s^t (i) )  | {\cal F}_0  ] - E [ f ( X_s^t (i))] \big| \le   \, ( t-s +1 ) \, \| f \|_\infty \, \varphi (s)    .
\end{equation}
Moreover, $\varphi ( n)  \le C \frac{1}{n- 1} $ for some fixed constant $C. $ 
\item
Under the assumptions of Theorem \ref{theo:2}, suppose moreover that there exists a constant $C > 0 $ such that 
\begin{equation}\label{eq:exponentialg}
g_j (n ) \le C e^{ - \beta n } \mbox{ and }  \sup_{i } \sum_{ j \notin V_i (n) } | W_{ i \rightarrow j } |   \le C e^{ - \beta n },
\end{equation}
for all $j \in I , n \in \N ,$ for some $\beta > 0 .$   

Then there exists a critical parameter $\beta_* $ such that if $ \beta > \beta_* , $  (\ref{eq:lossofmemory}) holds with 
\begin{equation}\label{eq:lossexp}
 \varphi ( s) = C \varrho^s \mbox{ for some } \varrho \in ] 0 , 1 [  \mbox{ depending only on $\beta .$}
\end{equation}
\end{enumerate}  
\end{cor}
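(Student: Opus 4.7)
My plan is to derive \reff{eq:lossofmemory} as a byproduct of the backward space-time exploration used in the proofs of Theorems~\ref{theo:3} and~\ref{theo:2}. The Kalikow-type decomposition expresses the one-site transition $p_{(j,v)}(\,\cdot\mid x)$ as a convex combination $\sum_{k \ge -1} \lambda^{(k)}_{(j,v)} p^{(k)}_{(j,v)}$, where $p^{(-1)}$ is a constant (associated with the spontaneous rate $\delta$ in Theorem~\ref{theo:3}, respectively with the residual coming from the $1/\gamma$ gap in \reff{eq:veryveryfast}), and $p^{(k)}$ for $k \ge 0$ depends only on the configuration inside $V_j(k) \times [v-k, v-1]$. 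Using common randomness to pick the index $k$ and then the outcome of $p^{(k)}$ at every space-time site, I would couple the canonical stationary chain $X$ to a second copy $\tilde X$ constructed from the same mixture variables but with the past configuration on $\F_0$ replaced by an independent stationary sample. This induces, at every site $(j,v)$ with $v > 0$, a random \emph{clan of ancestors} $\Upsilon_{(j,v)} \subset I \times \Z$ with the property that $X$ and $\tilde X$ agree on $(j,v)$ whenever $\Upsilon_{(j,v)}$ does not intersect $I \times \{u \le 0\}$.

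Setting $\Upsilon^i_{s,t} = \bigcup_{u=s}^t \Upsilon_{(i,u)}$, the coupling inequality gives
$$\bigl| E[f(X_s^t(i)) \mid \F_0] - E[f(X_s^t(i))] \bigr| \le \|f\|_\infty \, P\bigl( \Upsilon^i_{s,t} \text{ reaches below time } 0 \bigr).$$
A union bound over the roots $u \in [s,t]$ reduces the problem to bounding, uniformly in $u \ge s$, the probability $\varphi(u)$ that the clan rooted at a single site $(i,u)$ reaches below time $0$. Under the hypotheses of Theorem~\ref{theo:3}, \reff{eq:G} guarantees that the random look-back depth drawn at each visited site has finite second moment, and \reff{eq:veryfast} makes the mean number of spatial descendants per visited site summable; together with $\delta > \delta_*$ this ensures that the total backward range of the clan has finite expectation. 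Markov's inequality then yields $\varphi(u) \le C/(u-1)$, and since this is non-increasing in $u$ the uniform bound at $u = s$ combined with the $(t-s+1)$ terms from the union bound produces the prefactor $(t-s+1)\varphi(s)$. The argument under the assumptions of Theorem~\ref{theo:2} is identical, with \reff{eq:veryveryfast} playing the combined role of \reff{eq:G} and \reff{eq:veryfast}.

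For part 2, the exponential tail assumption \reff{eq:exponentialg} upgrades the finite first moment of both the look-back depth and of the synaptic tails into finite exponential moments, provided $\beta$ is large enough for the branching of the clan to remain subcritical; this defines the threshold $\beta_*$. A Chernoff-type bound then replaces Markov's inequality and produces $P(\Upsilon^i_{s,t} \text{ reaches below } 0) \le C(t-s+1)\varrho^s$ for some $\varrho \in (0,1)$ depending only on $\beta$, yielding \reff{eq:lossexp}. The main technical obstacle is precisely the control of branching: each visit to a site $(j,v)$ may open up to $|V_j(k)|$ new sites with time windows of width $k$, so one must show that the expected space-time volume of the clan remains finite uniformly in the starting root. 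This is exactly what the summability conditions \reff{eq:veryfast}, \reff{eq:veryveryfast} and the critical thresholds $\delta_*$, $\beta_*$ are engineered to guarantee, and the relevant moment bounds are already contained in the proofs of Theorems~\ref{theo:3} and~\ref{theo:2}.
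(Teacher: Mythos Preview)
Your proposal is correct and follows essentially the same route as the paper: the clan-of-ancestors coupling from the Kalikow decomposition, the union bound over the roots $u\in[s,t]$, Markov's inequality on the total backward reach of the clan for part~1, and a Chernoff bound via the total offspring of a dominating Galton--Watson process for part~2. The only cosmetic difference is that the paper phrases the decoupling via measurability of $X_s^t(i)$ with respect to the randomness used after time $R=\inf_{u\in[s,t]}(u-T_{STOP}^{(i,u)})$ and then an independence argument (with an extra layer of conditioning on the spontaneous-spike field $\xi$ under Theorem~\ref{theo:3}), whereas you state it directly as a coupling inequality; these are equivalent.
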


The proof of Theorem \ref{theo:3} is given in Section \ref{sec:3} below. It is based on a conditional Kalikow-type decomposition of the transition probabilities $\phi_i , $ where we decompose with respect to all possible interaction neighborhoods of site $i .$ A main ingredient is the construction of an associated branching process in random environment. The setup of Theorem \ref{theo:2} is conceptually less difficult, since in this case the transition probabilities are continuous. This follows from the summability of the aging factors $g_j .$ The proof of Theorem \ref{theo:2} relies on a space-time Kalikow-type decomposition presented in Section \ref{sec:4}.  

\begin{rem}
The proofs of both Theorem \ref{theo:3} and Theorem \ref{theo:2} imply the existence of a perfect simulation algorithm of the stochastic chain $(X_t)_{t \in \Z} .$ By a perfect simulation algorithm we mean a simulation which samples in a finite space-time window precisely from the stationary law $P .$ In the setup of Theorem \ref{theo:2} the simulation can be implemented analogously to what is presented in Galves {\it et al.} (2013). The setup of Theorem \ref{theo:3} requires a conditional approach, conditionally on the realization of the spontaneous spike times. 
\end{rem}

\section{Correlations between inter-spike intervals in the critical directed Erd\"os-R\'enyi random graph}\label{sec:stat}
We consider a finite system consisting of a large number of $N$ neurons with random synaptic weights $ W_{i \to j} , i \neq j .$ The sequence $ W_{i \to j} , i \neq j ,$ is a sequence of i.i.d. Bernoulli random variables defined on some probability space $ ( \tilde \Omega , \tilde {\cal A} , \tilde P ) $ with parameter $p = p_N  ,$ 
i.e. 
$$ \tilde P ( W_{i \to j}  = 1) = 1  - \tilde P(  W_{i \to j}  = 0 ) = p_N ,$$
where 
\begin{equation}\label{eq:criticalregime}
 p_N = \lambda / N \mbox{ and } \lambda = 1 + \vartheta / N \mbox{ for some } 0 < \vartheta < \infty .
\end{equation}
If we represent this as a directed graph where the directed link $ i \to j$ is present if and only if $W_{ i \to j } = 1,$ we obtain what is called a ``critical directed Erd\"os-R\'enyi random graph''. For a general reference on random graphs we refer the reader to the classical book by Bollob\'as (2001)\nocite{bollobas}. 

We put $ W_{j \to j } \equiv 0 $ for all $ j .$ Notice that the synaptic weights $W_{i \to j } $ and $W_{ j \to i } $ are distinct and independent random variables. Conditionally on the choice of the connectivities $ W = ( W_{ i \to j }, i \neq j ) ,$ the dynamics of the chain are then given by
$$ P^W ( X_t (i) = 1 | {\cal F}_{t-1} ) =   \phi_i ( \sum_{ j } W_{j \to i } \sum_{ s= L_t^i }^{t- 1} g_j ( t-s) X_s (j)  ).$$
Here we suppose that $\phi_i$ is a function only of the accumulated and weighted number of spikes coming from interacting neurons, but does not depend directly on the time elapsed since the last spike. 

$P^W $ denotes the conditional law of the process, conditioned on the choice of $W .$ We write $\P $ for the annealed law where we also average with respect to the random weights, i.e. $ \P = \tilde E \left[ P^W ( \cdot ) \right] ,$ where $\tilde E $ denotes the expectation with respect to $\tilde P.$ 

Fix a neuron $i$ and consider its associated sequence of successive spike times 
\begin{equation}\label{eq:spikes}
 \ldots < S^i_{- n } < \ldots < S^i _0 \le 0 < S^i_1 < S^i_2 < \ldots < S^i_n < \ldots ,
\end{equation}
where 
$$ S_1^i = \inf \{ t \geq 1 : X_t(i) = 1 \} , \ldots , S_n^i = \inf \{ t > S_{n-1}^i : X_t (i ) = 1 \} , n \geq 2 , $$
and
$$ S_0^i = \sup \{ t \le 0 : X_t (i) = 1 \} , \ldots , S^i_{ -n} = \sup \{ t < S^i_{ - n + 1 } : X_t (i) = 1 \} , n \geq 1 .$$

Let us fix $W.$ We are interested in the covariance between successive inter-spike intervals
$$ Cov^W (  S^i_{k+1}- S^i_k , S^i_k - S^i_{k-1} ) = E^W [( S^i_{k+1}- S^i_{k} ) ( S^i_k - S^i_{k-1})] - E^W ( S^i_{k+1} - S^i_k ) E^W ( S^i_k - S^i_{k-1} )   ,$$
for any $ k \neq 0,   1 .$
Since the process is stationary, the above covariance does not depend on the particular choice of $k .$ The next theorem shows that neighboring inter-spike intervals are asymptotically uncorrelated as the number of neurons $N$ tends to infinity.  

\begin{theo}\label{theo:4}
Assume that (\ref{eq:Lip}), (\ref{eq:delta}) and (\ref{eq:G}) are satisfied. Then there exists a measurable subset $ A \in \tilde {\cal A} ,$ such that on $A,$ 
$$ |Cov^W (  S^i_3- S^i_2 , S^i_2 - S^i_1 )| \le \frac{3}{\delta^2} N (1 - \delta)^{\sqrt{N} }  , $$
where $\delta $ is the lower bound appearing in Condition (\ref{eq:delta}). 
Moreover,  
$$ \P ( A^c  ) \le e^{2\vartheta} N^{ - 1/2}  .$$

\end{theo}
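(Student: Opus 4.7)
The plan is to reduce the covariance to an autonomous subsystem indexed by the ancestors of $i$ in $W$, couple its post-$S^i_2$ dynamics with an independent stationary copy using the uniform base firing rate $\delta$, and separately bound the probability that the ancestor cluster is atypically large. Let $\mathcal{A}(i) = \{i\} \cup \{j \in I : \text{there is a directed path from } j \text{ to } i \text{ in } W\}$; the closure of $\mathcal{A}(i)$ under in-neighbors makes $(X_t(j))_{j \in \mathcal{A}(i)}$ an autonomous sub-dynamics that determines $\tau_1$ and $\tau_2$. I will realize the dynamics through i.i.d.\ uniform $U_t(j) \in [0,1]$ with $X_t(j) = \mathbf{1}\{U_t(j) \leq \phi_j(\cdot)\}$; by \reff{eq:delta}, $\{U_t(j) \leq \delta\}$ is a \emph{base spike} that forces $X_t(j) = 1$ irrespective of the past, and $T^\star_j := \inf\{t > S^i_2 : U_t(j) \leq \delta\}$ is geometric with parameter $\delta$, independent of $\mathcal{F}_{S^i_2}$, and independent across $j$.

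Next I introduce an independent stationary copy $\hat X$ of the chain, conditioned to have a spike of $i$ at $S^i_2$, built using the same noise $U_t(j)$ as $X$ for $t > S^i_2$ but with an independent past for $t \leq S^i_2$. The associated $\hat\tau_2$ satisfies $\hat\tau_2 \stackrel{d}{=} \tau_2$ and is independent of $\tau_1$ (which is $\mathcal{F}_{S^i_2}$-measurable). Since the base spike at $T^\star_i$ fires $i$ in both processes, both $\tau_2$ and $\hat\tau_2$ are bounded above by $T^\star_i - S^i_2$, so
\[
Cov^W(\tau_1,\tau_2) = E^W[\tau_1(\tau_2 - \hat\tau_2)], \qquad |\tau_2 - \hat\tau_2| \leq T^\star_i - S^i_2.
\]
On the good event $B = \{T^\star_j \leq S^i_2 + \sqrt{N} \text{ for every } j \in \mathcal{A}(i)\}$, an induction over time---exploiting the closure of $\mathcal{A}(i)$ under in-neighbors and the Kalikow-type decomposition of Section \ref{sec:3}---will show that $X$ and $\hat X$ synchronize on $\mathcal{A}(i)$ from the latest base-spike time onward, so that $\tau_2 = \hat\tau_2$ on $B$. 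Splitting the union bound over $j \in \mathcal{A}(i)$ into the $j \neq i$ terms (where $T^\star_j$ is independent of $T^\star_i$) and the $j=i$ term, an explicit geometric-moment calculation yields
\[
|Cov^W(\tau_1,\tau_2)| \leq E^W[\tau_1] \cdot E^W[(T^\star_i - S^i_2)\mathbf{1}_{B^c}] \leq \frac{1}{\delta} \Bigl( \frac{|\mathcal{A}(i)|}{\delta} + \sqrt{N} \Bigr)(1-\delta)^{\sqrt{N}},
\]
and the trivial bound $|\mathcal{A}(i)| \leq N$ produces the stated estimate.

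The graph-side event $A \in \tilde{\mathcal{A}}$ is then chosen so as to control the ancestor exploration of $i$. Running a BFS from $i$ backward along directed edges, the exploration is dominated by a Bienaym\'e-Galton-Watson process with $\mathrm{Bin}(N-1, \lambda/N)$ offspring of mean $\lambda = 1 + \vartheta/N$; in the random-walk representation $S_{t+1} = S_t - 1 + \xi_t$ with $S_0 = 1$, a near-critical hitting-time estimate (via the Dwass--Otter formula and Stirling applied to the Borel--Tanner tail) yields $\tilde P(A^c) \leq e^{2\vartheta}/\sqrt{N}$ for the appropriate choice of $A$. The principal technical difficulty will lie in the coupling step: verifying rigorously that, once every ancestor has produced a base spike in $(S^i_2, S^i_2+\sqrt{N}]$, the memory windows $[L^{X,j}_t, t-1]$ and $[L^{\hat X,j}_t, t-1]$ both collapse into the post-$S^i_2$ regime and carry identical data, so that $X$ and $\hat X$ evolve identically on $\mathcal{A}(i)$ thereafter. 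The Kalikow decomposition developed in Section \ref{sec:3} is the natural tool to make this synchronization argument rigorous.
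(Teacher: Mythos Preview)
Your coupling step contains a genuine gap. The event $B=\{T^\star_j\le S^i_2+\sqrt N\ \text{for all }j\in\mathcal A(i)\}$ does \emph{not} force $X$ and $\hat X$ to coincide on $\mathcal A(i)$ ``from the latest base-spike time onward'', and hence does not give $\tau_2=\hat\tau_2$. A single base spike of $j$ resets $L^j_t$, but the transition of $j$ at the very next step still reads the values of its in-neighbours at the base-spike time, and those in-neighbours need not have synchronized yet. When $\mathcal A(i)$ contains a directed cycle not passing through $i$ (say $j\to k\to j$ with $j\to i$), the discrepancy between $X$ and $\hat X$ can be passed back and forth along that cycle indefinitely, regardless of how many base spikes occur. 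In particular $\tau_2$ may be decided well before any synchronization takes place (indeed $\tau_2\le T^\star_i-S^i_2$, which can be much smaller than $\max_j T^\star_j-S^i_2$). The Kalikow decomposition of Section~\ref{sec:3} does not rescue this: it localizes the dependence of a single transition but does not prevent the propagation of disagreement along cycles inside $\mathcal A(i)$.

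The paper avoids this difficulty by working with a \emph{graph} event rather than a dynamical one: it takes $A=\{\tau^i>2\sqrt N\}$, where $\tau^i$ is the length of the shortest directed cycle through $i$, and proves (Proposition~\ref{prop:4}) that on this event the single-neuron transition probabilities satisfy $p^{(W,i)}(1\mid 0^{k-1}1\,a_{-l}^{-1})=p^{(W,i)}(1\mid 0^{k-1}1)$ whenever $k+l\le 2\sqrt N$. This is a structural statement---on $A$, information emitted by $i$ cannot return to $i$ within the relevant window---and it directly yields that the conditional law of $S^i_3-S^i_2$ given $\mathcal G^{\,t}_{t-\sqrt N}$ equals its unconditional law on the spontaneous-spike event $\{\,l\ge S^i_2-\sqrt N,\ r\le S^i_2+\sqrt N\,\}$; the covariance bound then follows by elementary geometric-tail estimates. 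The bound $\tilde P(A^c)\le e^{2\vartheta}N^{-1/2}$ comes from a branching comparison for the \emph{forward} neighbourhoods ${\cal V}^n_{i\to\cdot}$ (Proposition~\ref{prop:tau}), not for the backward ancestor set you consider. Note finally that, were your argument correct, it would give the covariance bound for \emph{every} $W$ (you only use $|\mathcal A(i)|\le N$), so the separate discussion of a graph event $A$ would be superfluous; this internal inconsistency is another symptom of the gap above.
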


For large $N,$ if the graph of synaptic weights belongs to the ``good'' set $A,$ the above result is compatible with the discussion in Gerstner and Kistler (2002). Gerstner and Kistler (2002) deduce from this that spiking can be described by a renewal process. However, for small $N$ or on $A^c ,$ the correlation might as well be quite big, as reported by Nawrot et al. (2007) who show that neighboring inter-spike intervals are negatively correlated. Therefore, both features are captured by our model, depending on the scale we are working in. 
 
The proof of the above theorem is given in Section \ref{sec:proofstat} below.

\section{Conditional Kalikow-type decomposition and proof of Theorem \ref{theo:3}}\label{sec:3}
In order to prove existence and uniqueness of a chain consistent with (\ref{eq:dyn}) and (\ref{eq:transition2}), we introduce a Kalikow-type decomposition of the infinite order transition probabilities. This type of decomposition was considered in the papers by Ferrari, Maass, Mart\'{\i}nez and Ney (2000)\nocite{ferrari-maass}, Comets, Fern\'andez and Ferrari (2002)\nocite{Comets2002} and Galves {\it et al.} (2013)\nocite{gglo2013}. All these papers deal with the case in which the transition probabilities are continuous. This is not the case here.  We are dealing with a more general case in which the transition probabilities might as well be discontinuous, see the discussion in Remark \ref{rem:1}. This makes our approach new and interesting by itself. The new ingredient is a construction of a decomposition depending on a random environment. This random environment is given by the realization of the spontaneous spikes.

More precisely, Condition (\ref{eq:delta}) allows to introduce a sequence of i.i.d. Bernoulli random variables $ (\xi_t (i ) , i \in I, t \in \Z )$ of parameter $\delta ,$ such that positions and times $(i, t) $ with $ \xi_t (i ) = 1 $ are spike times for any realization of the chain. We call these times ``spontaneous spike times''. We work conditionally on the choice of $\xi_t (i ) ,t \in \Z , i \in I  .$ In particular we will restrict everything to the state space 
$$ {\cal S}^\xi = \{ x \in \{ 0, 1 \}^{ I \times \Z } : x_t (i) \geq \xi_t(i), \forall i\in I, t \in \Z \} $$
which is the space of all configurations compatible with $\xi ,$ i.e. all neural systems $x$ such that every spike time of $\xi$ is also a spike time of $x.$ We write 
\begin{equation}\label{eq:rti}
R_t^i  = \sup \{ s<  t : \xi_s (i ) = 1 \} 
\end{equation}
for the last spontaneous spike time of neuron $i$ before time $t .$ Moreover, for $x \in {\cal S}^\xi, $ we put $L_t^i = L_t^i (x) = \sup \{ s < t : x_s (i ) = 1 \} .$

Consider a couple  $(i, t ) $ with $ \xi_t (i) = 0 .$ In order to prove Theorem \ref{theo:3} we need to introduce the following quantities which depend on the realization of $\xi ,$  namely
\begin{equation}\label{eq:314}
r_{ (i,t)}^{ [-1 ]} (1 )  = \inf_{ x \in \SS } \phi_i \left( \sum_{j } W_{j \to i} \sum_{ s = L_t^i (x) }^{t-1} g_j (t-s) \, x_s (j) , t - L_t^i (x)\right)  , \end{equation}
which is the minimal probability that neuron $i$ spikes at time $t,$ uniformly with respect to all configurations, and
\begin{equation}\label{eq:315}
 r_{ (i,t)}^{ [-1 ]} (0 )  = \inf_{ x \in \SS } \left[ 1 - \phi_i \left( \sum_{j } W_{j \to i} \sum_{ s = L_t^i (x) }^{t-1} g_j (t- s) \,  x_s (j), t- L_t^i (x)\right) \right],
\end{equation}
which is the minimal probability that neuron $i$ does not spike at time $t .$ 

Notice that for all $x \in \SS, $ $L_t^i (x) \geq R_t^i .$ Hence in the above formulas, only a finite time window of the configuration $x$ is used, and uniformly in $x \in \SS , $ this time window is contained in $(x_s (j), R_t^i \le s \le {t- 1} , j \in I )  .$ In particular the above quantities are well-defined. 

Now fix $x \in \SS .$ For any $ k \geq 0 , $ we write for short $ x_{L_t^i  }^{t-1} (V_i(k))  $ for the space-time configuration
$$  x_{L_t^i  }^{t-1} (V_i(k)) = ( x_s (j ) : L_t^i \le s \le t-1, j \in V_i (k )) $$ 
and put
\begin{equation}\label{eq:316}
 r_{ (i,t)}^{ [k ]} (1 | x_{L_t^i  }^{t-1} (V_i(k))) =
\inf_{z \in \SS   : z (V_i(k) ) = x (V_i (k)) } 
\phi_i  \left( \sum_{j } W_{j \to i} \sum_{ s = L_t^i (x) }^{t-1} g_j ( t- s ) \, z_s (j) , t- L_t^i (x) \right) ,   \end{equation} 
\begin{eqnarray}\label{eq:317}
&&r_{(i,t)}^{ [k ]} (0| x_{L_t^i }^{t-1} (V_i(k))) = \nonumber \\
&& \inf_{z \in \SS   : z (V_i(k)) = x (V_i(k)) } \left[ 1 - 
\phi_i \left( \sum_{j } W_{j \to i} \sum_{ s = L_t^i (x) }^{t-1} g_j (t-s) \, z_s (j) , t- L_t^i (x) \right)  \right]  .
\end{eqnarray}
In what follows and whenever there is no danger of ambiguity, we will write for short $ x( V_i (k)) $ and $ r_{(i,t)}^{[k]} ( a | x (V_i(k)))$ or $ r_{(i,t)}^{[k]} ( a |x) $ instead of $x_{L_i^t }^{t-1} (V_i(k))$ and $ r_{(i,t)}^{[k]} ( a | x_{L_t^i }^{t-1}  (V_i(k))).$ 

We put
\begin{equation}
\alpha_{(i,t)} ( -1) = \lambda_{(i,t)} ( -1) = \sum_{a = 0}^1 r_{(i,t)}^{[-1]} ( a ) ,
\end{equation}
\begin{equation}\label{eq:alpha}
\alpha_{(i,t)} ( k ) = \inf_{ x \in \SS}  \left( \sum_{a = 0}^1 r_{(i,t)}^{[k]} ( a| x ( V_i (k ) ) ) \right) , \; k \geq 0 ,
\end{equation}
and
\begin{equation}\label{eq:lambdak}
 \lambda_{(i,t)} (k ) = \alpha_{(i,t)} ( k )  - \alpha_{(i,t)} ( k - 1 ), k \geq 0 .
\end{equation}
Note that $ \lambda_{(i,t)} ( k ) \in [ 0, 1 ] $ and that $ \sum_{ k \geq - 1 }  \lambda_{(i,t)} ( k ) = 1 $ almost surely with respect to the realization of $ (\xi_t (i ) , i \in I, t \in \Z ).$

\begin{rem}
The $ \lambda_{(i,t)} (k ) , k \geq 0 , $ are random variables that depend only on the realization of $ (\xi_t (i ) , i \in I, t \in \Z ).$ 
More precisely, for any $i, t$ and $ k ,$  $ \lambda_{(i,t)} (k) $ is measurable with respect to the sigma-algebra $ \sigma ( \xi_s (j) , R_t^i \le s < t , j \in I ).$
We write $ \lambda_{(i,t)}^\xi ( k) , k \geq 0, $ in order to emphasize the dependance on the external field $\xi .$
\end{rem}

We introduce the short hand notation
\begin{equation}\label{eq:pit}
 p_{(i,t)} ( 1 | x) = \phi_i  \left( \sum_j W_{j\to  i } \sum_{s = L_t^i (x)  }^{t-1} g_j(t-s) \, x_s ( j) , t - L_t^i (x) \right)
\end{equation}
and 
$$ p_{(i,t)} ( 0 | x ) = 1 - p_{(i,t)} (1 | x) .$$
The proof of Theorem \ref{theo:3} is based on the following proposition. 

\begin{prop}\label{prop:1}
Given $\xi, $ for all $(i,t)$ with $ \xi_t (i ) = 0 ,$ there exists a family of conditional probabilities $  (p_{(i,t)}^{[k], \xi } (a|x))_{ k \geq 0 } $ satisfying the following properties.
\begin{enumerate}
\item
For all  $a,$ $ k \geq 0,$  $\SS \ni x \mapsto p_{(i,t)}^{[k], \xi } ( a | x ) $ depends only on the variables $ (x_s ( j ): L_t^i \le s \le t-1 , j \in V_i (k) ) .$ 
\item
For all $x \in \SS , $ $ k \geq 0,$ $ p_{(i,t)}^{[k] , \xi } ( 1 | x ) \in [0, 1 ], $ $ p_{(i,t)}^{[k], \xi } ( 0| x )  +  p_{(i,t)}^{[k], \xi} ( 1 | x ) = 1 .$
\item
For all $a, x , k \geq 0 ,$ $p_{(i,t)}^{[k], \xi } ( a | x ) $ is a $\sigma ( \xi_s (j) , R_t^i \le s \le t-1 , j \in I)-$measurable random variable. 
\item
For all $x \in \SS ,$ we have the following convex decomposition.
\begin{equation}\label{eq:dec2}
p_{(i,t)} ( a | x )  = \lambda_{(i,t)} ( -1 ) p_{(i,t) }^{[-1] , \xi } ( a) + 
\sum_{ k \geq 0 } \lambda_{(i,t)} ( k ) p_{(i,t) }^{[k] , \xi } ( a| x ( V_i (k) )) ,
\end{equation}
where
$$ p_{(i,t) }^{[-1], \xi } ( a) = \frac{ r_{(i,t)}^{[-1]} ( a)}{ \lambda_{(i,t)} ( -1 ) } .$$
\end{enumerate}
\end{prop}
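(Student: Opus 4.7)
The plan is to carry out a Kalikow-type decomposition adapted to the random environment $\xi$, in three main steps: establishing monotonicity and uniform convergence of the lower bounds $r^{[k]}_{(i,t)}$, constructing the conditional probabilities $p^{[k],\xi}_{(i,t)}$, and verifying the four listed properties.

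First I would check that $r^{[k]}_{(i,t)}(a|x(V_i(k)))$ is non-decreasing in $k$, since the infimum is taken over a shrinking family of competitors $z$ agreeing with $x$ on the nested sets $V_i(k)$. The key analytic input is the uniform estimate
\[
\sup_{x \in \SS} \bigl| p_{(i,t)}(a|x) - r^{[k]}_{(i,t)}(a|x(V_i(k))) \bigr| \leq \gamma \, G(t - R_t^i) \sum_{j \notin V_i(k)} |W_{j \to i}| \to 0
\]
as $k \to \infty$. This combines the Lipschitz bound (\ref{eq:Lip}), the summability (\ref{eq:summable}), and the crucial finite-window property $L_t^i(x) \geq R_t^i$ enforced by $x \in \SS$ (which also forces $L_t^i(z) = L_t^i(x)$ for any competing $z$ because $\{i\} \subset V_i(k)$), so that $G(t - R_t^i)$ is finite for each realization of $\xi$. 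It follows that $\sigma^{[k]}_{(i,t)}(x) \uparrow 1$ uniformly in $x$, hence $\alpha_{(i,t)}(k) \uparrow 1$ and $\sum_{k \geq -1} \lambda_{(i,t)}(k) = 1$ almost surely in $\xi$.

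Next I would put $p^{[-1],\xi}_{(i,t)}(a) := r^{[-1]}_{(i,t)}(a)/\lambda_{(i,t)}(-1)$, well-defined since $\lambda_{(i,t)}(-1) \geq 2\delta > 0$ by (\ref{eq:delta}). For $k \geq 0$, the naive choice $[r^{[k]}_{(i,t)}(a|x(V_i(k))) - r^{[k-1]}_{(i,t)}(a|x(V_i(k-1)))]/\lambda_{(i,t)}(k)$ is non-negative but in general does not sum to $1$ over $a$, because its total mass $\sigma^{[k]}_{(i,t)}(x) - \sigma^{[k-1]}_{(i,t)}(x)$ depends on $x$ while $\lambda_{(i,t)}(k)$ does not. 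I would therefore build iteratively an auxiliary family $\tilde r^{[k]}_{(i,t)}(a|x(V_i(k)))$ satisfying $\sum_a \tilde r^{[k]}_{(i,t)}(a|x) = \alpha_{(i,t)}(k)$ (so totals are $x$-independent), $0 \leq \tilde r^{[k-1]}_{(i,t)} \leq \tilde r^{[k]}_{(i,t)} \leq r^{[k]}_{(i,t)}$, and $\tilde r^{[k]}_{(i,t)}(a|x) \to p_{(i,t)}(a|x)$ uniformly, and then define $p^{[k],\xi}_{(i,t)}(a|x(V_i(k))) := [\tilde r^{[k]}_{(i,t)}(a|x(V_i(k))) - \tilde r^{[k-1]}_{(i,t)}(a|x(V_i(k-1)))]/\lambda_{(i,t)}(k)$. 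Its $a$-sum equals $[\alpha_{(i,t)}(k)-\alpha_{(i,t)}(k-1)]/\lambda_{(i,t)}(k) = 1$; non-negativity follows from the monotonicity of $\tilde r^{[k]}_{(i,t)}$ in $k$; locality in $V_i(k)$ and measurability in $\sigma(\xi_s(j), R_t^i \leq s \leq t-1, j \in I)$ are inherited from the $r^{[k]}_{(i,t)}$'s; and the convex decomposition (\ref{eq:dec2}) reduces to a telescoping sum using $\tilde r^{[k]}_{(i,t)} \to p_{(i,t)}$.

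The hard part will be the construction of $\tilde r^{[k]}_{(i,t)}$: one must absorb the $x$-dependent excess $\sigma^{[k]}_{(i,t)}(x) - \alpha_{(i,t)}(k) \geq 0$ while simultaneously preserving monotonicity in $k$, non-negativity, and dependence on $x$ only through $V_i(k)$. The naive proportional rescaling $r^{[k]}_{(i,t)}(a|x) \cdot \alpha_{(i,t)}(k)/\sigma^{[k]}_{(i,t)}(x)$ gives the correct total and locality but breaks monotonicity, while subtracting the excess from a single $a$-slot breaks non-negativity. My plan would be a level-by-level water-filling: at level $k$, the unused mass from level $k-1$ is first carried forward, and any remaining excess is distributed among the $a$-slots so that the running quantities $\tilde r^{[k]}_{(i,t)}(a|x)$ stay in $[0, r^{[k]}_{(i,t)}(a|x)]$. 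Threading these constraints simultaneously, in a way that is measurable in $\xi$ and local in $V_i(k)$, is the delicate technical point.
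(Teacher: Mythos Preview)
Your diagnosis of the problem is exactly right: the telescoping increments $r^{[k]}_{(i,t)}(a|x)-r^{[k-1]}_{(i,t)}(a|x)$ are nonnegative and converge, but their $a$-sum $\sigma^{[k]}_{(i,t)}(x)-\sigma^{[k-1]}_{(i,t)}(x)$ depends on $x$ while $\lambda_{(i,t)}(k)$ does not. The paper faces the same obstacle and resolves it differently from your water-filling plan.

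Instead of modifying the $r^{[k]}$'s, the paper first accepts the $x$-dependent decomposition
\[
p_{(i,t)}(a|x)=\lambda_{(i,t)}(-1)\,p^{[-1]}_{(i,t)}(a)+\sum_{k\ge 0}\tilde\lambda_{(i,t)}(k,x)\,\tilde p^{[k]}_{(i,t)}(a|x),
\qquad \tilde p^{[k]}_{(i,t)}(a|x)=\frac{\Delta^{[k]}_{(i,t)}(a|x)}{\tilde\lambda_{(i,t)}(k,x)},
\]
and then performs a \emph{repartitioning} of the interval $[0,1]$. The cumulative $x$-dependent masses $\alpha_{(i,t)}(k,x)=\sum_a r^{[k]}_{(i,t)}(a|x)$ partition $[0,1]$ into blocks on which one uses $\tilde p^{[l]}$; since $\alpha_{(i,t)}(k)=\inf_x\alpha_{(i,t)}(k,x)\le\alpha_{(i,t)}(k,x)$, each $x$-independent block $]\alpha_{(i,t)}(k-1),\alpha_{(i,t)}(k)]$ meets only $x$-dependent blocks with index $l\le k$. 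The final $p^{[k]}_{(i,t)}$ is then defined as the explicit convex mixture of those $\tilde p^{[l]}$'s, with weights given by the lengths of the overlaps. Locality in $V_i(k)$, normalisation, and the decomposition~(\ref{eq:dec2}) are then automatic from the geometry; no auxiliary $\tilde r^{[k]}$'s are needed.

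Your water-filling route is in principle workable and, if carried out, would yield an equivalent object (indeed setting $\tilde r^{[k]}=\sum_{l\le k}\lambda_{(i,t)}(l)\,p^{[l]}$ with the paper's $p^{[l]}$ retrospectively satisfies your wish-list). But as you yourself flag, the simultaneous constraints---monotonicity in $k$, the cap $\tilde r^{[k]}\le r^{[k]}$, fixed $a$-total $\alpha_{(i,t)}(k)$, and $V_i(k)$-locality---are not obviously compatible under a greedy level-by-level trimming, and you have not exhibited a rule that threads them. The paper's partition-overlap formula sidesteps this entirely and gives a closed-form definition of $p^{[k]}$; I would recommend adopting it rather than trying to engineer the $\tilde r^{[k]}$'s directly. (One small slip: $\lambda_{(i,t)}(-1)\ge\delta$, not $2\delta$, since only $r^{[-1]}_{(i,t)}(1)\ge\delta$ is guaranteed by (\ref{eq:delta}); $r^{[-1]}_{(i,t)}(0)$ may vanish.)
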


From now on, we shall omit the subscript $\xi $ whenever there is no danger of ambiguity and write $ p_{(i,t)}^{[ k ]} $ instead of $ p_{(i,t) }^{[k] , \xi } .$

\begin{rem}
The decomposition (\ref{eq:dec2}) of the transition probability $p_{(i,t)} ( \cdot | x ) $ can be interpreted as follows. In a first step, we choose a random spatial interaction range $k \in \{ - 1 , 0 , 1 , \ldots \} $ according to the probability distribution $\{ \lambda_{(i,t) } (k ) , k \geq - 1 \} .$ Once the range of the spatial interaction is fixed, we then perform a transition according to $p_{(i,t) }^{[k]} $ which depends only on the finite space-time configuration $  x_{L_t^i  }^{t-1} (V_i(k)) .$ A comprehensive introduction to this technique can be found in the lecture notes of Fern\'andez et al. (2001)\nocite{FFG_EBP}.
\end{rem}

\begin{ex}
Suppose that all interactions are excitatory, i.e. $W_{j \to i } \geq 0 $ for all $ i \neq j .$ Suppose further that $ g_i (s) = 1 $ for all $ s \in \N , i \in I, $ and that $\Phi_i  (s,n) = \Phi_i ( s) $ does not depend on $n$ and is non-decreasing in $s.$ Then
$$ r_{(i,t)}^{[-1]} ( 1 ) = \Phi_i \left( \sum_{j } W_{j \to  i } \xi_{t- 1 } ( j)  \right) \; \mbox{ and } \; r_{(i,t)}^{[-1]} ( 0 ) = 1 - \Phi_
i \left( \sum_j W_{j \to  i } ( t - R_t^i ) \right) .$$ 
Hence,
$$ \lambda_{(i,t)} (-1)  = 1 + \Phi_i \left( \sum_{j } W_{j \to i } \xi_{t- 1 } ( j) \right) - \Phi_i \left( \sum_j W_{j \to i } ( t- R_t^i ) \right) .$$
Similarly,
$$ r_{(i,t)}^{ [k]} ( 1 | x ) = \Phi_i \left( \sum_{ j \notin V_i (k) } W_{j \to i } \sum_{m= L_t^i (x) }^{t- 1 } \xi_s (j) + \sum_{ j \in V_i (k)} W_{j \to i } \sum_{s= L_t^i (x)}^{t- 1 } x_s ( j) \right)  $$
and
$$ r_{(i,t)}^{ [k]} ( 0 | x ) = 1 - \Phi_i  \left( \sum_{ j \notin V_i (k) } W_{j \to i } ( t - L_t^i (x))  + \sum_{ j \in V_i (k)} W_{j \to i } \sum_{s= L_t^i (x)  }^{t- 1 } x_s ( j) \right) . $$
\end{ex}

\begin{proof}{\bf of Proposition \ref{prop:1}}
We have for any $N \geq 1, $ $ a \in \{  0, 1\}  $ and $x \in \SS ,$  
$$ p_{(i,t)} ( a| x ) = r_{(i,t)}^{[-1]} ( a) + \left( \sum_{ k= 0 }^N\Delta_{(i,t)}^{[k]} ( a | x (V_i(k))) \right) + \left( p_{(i,t)} ( a | x ) - r_{(i,t)}^{[N]} ( a | x ) \right),$$
where 
$$ \Delta_{(i,t)}^{[k]} ( a | x (V_i(k))) =  r_{(i,t)}^{ [k]} (  a | x(V_i(k))) - r_{(i,t)}^{ [k-1]} (  a | x (V_i(k-1))) .$$ 
Now, due to condition (\ref{eq:Lip}),
$$  |p_{(i,t)} ( a | x ) - r_{(i,t)}^{[N]} ( a | x )  | \le \gamma   \; \left( \sum_{ s = R_t^i }^{t- 1 } \sup_j g_j (t-s) \right) \sum_{ j \notin V_i (N) } | W_{ j \to i } |  \to 0   $$
as $ N \to \infty $ due to (\ref{eq:summable}). In the above upper bound we used that 
$$ \sum_{s = L_t^i }^{t- 1 } g_j( t-s) | z_s (j) - x_s (j) | \le \sum_{ s = R_t^i }^{t- 1 } \sup_j g_j(t-s)  < \infty $$
almost surely, which is a consequence of (\ref{eq:G}). Therefore we obtain the following decomposition.
\begin{equation}\label{eq:dec1}
p_{(i,t)} ( a | x ) = r_{(i,t)}^{[-1]} (a ) + \sum_{ k = 0 }^\infty \Delta^{[k]}_{(i,t)} ( a | x (V_i (k ))) , \, a \in  \{0, 1 \} , \mbox{ for all } x \in \SS .
\end{equation}
Now let
$$ p_{(i,t)}^{[-1]} (a) = \frac{r_{(i,t)}^{[-1]} (a)}{ \lambda_{(i,t)}  (-1)} .$$
Moreover, for $ k \geq 0,$ put 
\begin{equation}\label{eq:lambdaalmost}
 \tilde{\lambda}_{(i,t)} (k, x(V_i(k))) =  \sum_a  \Delta_{(i,t)}^{[k]} (a|  x (V_i (k )))  ,
 \end{equation}
and for any $(i,t) , k$ such that $ \tilde{\lambda}_{(i, t)}  (k,  x (V_i (k ))) > 0,$ we define
$$ \tilde{p}_{(i,t)}^{[k]} (a |  x (V_i (k )) )= \frac{\Delta_{(i,t)}^{[k]} ( a | x (V_i(k)))}{ \tilde{\lambda}_{(i,t)} (k, x (V_i (k ))) }.$$ For $(i,t) , k$ such that
$ \tilde{\lambda}_{(i, t)}  (k,  x (V_i (k )))= 0,$ define $ \tilde{p}_{(i,t)}^{[k]} (a |  x(V_i (k )) )$ in an arbitrary fixed way. Hence  
\begin{equation}\label{eq:almost}
 p_{(i,t)} (a |x ) =  \lambda_{(i,t)}  (- 1) p_{(i,t)} ^{[-1]} (a) + \sum_{k= 0 }^\infty \tilde{\lambda}_{(i,t)} (k, x (V_i(k)))
\tilde{p}_{(i,t)}^{[k]} (a |  x (V_i (k )) )  .
\end{equation}
In (\ref{eq:almost}) the factors $\tilde{\lambda}_{(i,t)} (k, x(V_i(k))),
k \geq 0, $ still depend on $x^{t-1}_{L_t^i} (V_i(k)) .$ To obtain the desired
decomposition, we must rewrite it as follows.

For any $(i,t),$ take the sequences $\alpha_{(i,t)}
(k), \lambda_{(i,t)}  (k), k \geq -1,$ as defined in (\ref{eq:alpha}) and
(\ref{eq:lambdak}), respectively. Define the new quantities
$$\alpha_{(i,t)} (k, x (V_i(k))) = \sum_{l \le k} \tilde{\lambda}_{(i,t)} (l, x (V_i(l)))$$
and notice that 
$$ \alpha_{(i,t)} (k, x (V_i(k)))  = \sum_a  r_{(i,t)}^{[k]} (a, x ( V_i ( k ))) $$
is the total mass associated to $ r_{(i,t)}^{[k]} (\cdot , x( V_i ( k ))) .$ From now on and for the rest of the proof we shall write for short
$ \alpha_{(i,t) } (k, x ) $ instead of writing $ \alpha_{(i,t)} (k, x (V_i(k))) .$ 

Reading (\ref{eq:almost}) again, this means that for any $k \geq 0 , $ we have to use the transition probability $ \tilde p_{(i,t)}^{[k]} $ on the interval $ ] \alpha_{(i,t)} ( k- 1 , x ) , \alpha_{(i,t)} (k , x)] .$ 

By definition of $ \alpha_{(i,t)} (k) $ 
in (\ref{eq:alpha}), $ \alpha_{(i,t)} (k) $ is the smallest total mass associated to $ r_{(i,t)}^{[k]},$ 
uniformly with respect to all possible neighborhoods $ x ( V_i(k)).$ Hence, in order to get
the decomposition (\ref{eq:dec2}) with weights $ \lambda_{(i,t)} (k) $ not depending on the
configuration, we have to define a partition of the interval $ [ 0, \alpha_{(i,t)}  (k, x)]$
according to the values of $ \alpha_{(i,t)} (k) $ and we have to define probabilities $p_{(i,t)}^{[k]} $ working on the intervals $ ] \alpha_{(i,t)} ( k- 1) , \alpha_{ (i,t) } ( k ) ] .$  This can be done as follows.

Fix $ k \geq 0 $ and suppose that for some $l' \le l \le k-1, $
\begin{multline*}
\alpha_{(i,t)} (l'- 1, x ) < \alpha_{(i,t)} (k-1) \le \alpha_{(i,t)} (l', x )  < \ldots <\\
< \alpha_{(i,t)} (l, x )  < \alpha_{(i,t)} (k) \le \alpha_{(i,t)} (l +1, x ) ,
\end{multline*}
and therefore
\begin{multline*} 
] \alpha_{(i,t)} ( k- 1 ) , \alpha_{(i,t)} ( k ) ] = ] \alpha_{(i,t)} (k-1) , \alpha_{(i,t)} (l', x ) ] \cup \\
\left( \bigcup_{ m = l' + 1 }^l ] \alpha_{(i,t)} ( m-1 , x) , \alpha_{(i,t)} (m, x ) ] \right) \cup \; ] \alpha_{(i,t)} ( l, x) , \alpha_{(i,t)} ( k ) ] .
\end{multline*}
Hence the probability $p_{(i,t)}^{[k]} $ that has to be defined on the interval $ ] \alpha_{(i,t)} ( k- 1) , \alpha_{ (i,t) } ( k ) ] $ has to be decomposed according to the above decomposition into sub-intervals. On the first interval $ ] \alpha_{(i,t)} (k-1) , \alpha_{(i,t)} (l', x ) ] , $ we have to use the original probability $ \tilde p^{[l']}_{(i,t)} , $ on each of the intervals $] \alpha_{(i,t)} ( m-1 , x) , \alpha_{(i,t)} (m, x ) ]  ,$ we have to use $ \tilde p_{(i,t)}^{ [m]} , $ and finally on $  ] \alpha_{(i,t)} ( l, x) , \alpha_{(i,t)} ( k ) ] ,$ we use $ \tilde p_{(i,t)}^{ [ l+1 ] } .$ 

This yields, for  any $k \geq 0$, the following definition of the conditional finite range
probability densities. 
\begin{multline}
\label{eq:pik}
 p_{(i,t)}^{[k]} ( a| x (V_i({k})))  \\
=
 \sum_{-1 = l' \le l }^{k-1}  1_{\{
\alpha_{(i,t)} (l' - 1 ,x) < \alpha_{(i,t)} ({k-1}) \le \alpha_{(i,t)} ({l'},
x)\}} 
1_{\{
\alpha_{(i,t)} (l, x) < \alpha_{(i,t)} ({k}) \le \alpha_{(i,t)} ({l+1},
x)\}}  \\
\left[ \frac{\alpha_{(i,t)} (l', x ) - \alpha_{(i,t)} (k-1) }{ 
\lambda_{(i,t)} ({k})} \; \tilde{p}_{(i,t)}^{[l']} (a | x (V_i(l')))\right.   \\
  + \sum_{m = l'+1}^{l} \frac{\tilde{\lambda}_{(i,t)}  (m , x (V_i (m))}{  \lambda_{(i,t)} (k) }
\;  \tilde{p}_{(i,t)} ^{[m]} (a | x (V_i(m))) 
  \\
\left. + \; \frac{\alpha_{(i,t)} ({k}) - \alpha_{(i,t)}  (l, x )}{
\lambda_{(i,t)} ({k})} \; \tilde{p}_{(i,t)} ^{[l+1]} (a| x (V_i({l+1}))) \right] .
\end{multline} 
Note that by construction, the above defined probability $p_{(i,t)}^{[k]} $ depends only on the configuration $ x( V_i ( l+1 ) ), $ hence at most on $ x (V_i ( k ) ) , $ since $ l \le k - 1.$ 
Multiplying the above formula with $ \lambda_{(i,t)} ( k) $ and summing over all $k$ shows that the decomposition (\ref{eq:almost}) implies the desired decomposition (\ref{eq:dec2}). This finishes our proof.   
\end{proof}

Thanks to condition (\ref{eq:Lip}), the following estimate holds. It will be useful in the sequel.

\begin{lem}
We have
for all $ k \geq 1, $ 
\begin{eqnarray}\label{eq:upperboundlambda2}
\lambda_{(i,t)}^\xi ( k) &\le&  \gamma \sum_{ j \notin V_i (k-1 ) }  | W_{j \to i } | \sum_{ s = R_t^i }^{t-1} g_j (t-s) (  1 - \xi_s (j) )  \nonumber \\
&\le &  \gamma \left( \sum_{ s = R_t^i }^{t-1} \sup_j g_j(t-s)  \right)  \sum_{ j \notin V_i (k-1 ) }  | W_{j \to i } | 
.
\end{eqnarray}
\end{lem}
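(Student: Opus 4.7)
My plan is to reinterpret the sum $\sum_{a=0}^1 r_{(i,t)}^{[k]}(a|x(V_i(k)))$ as $1$ minus the oscillation of a single function, and then control this oscillation using the Lipschitz condition together with the fact that configurations in $\SS$ lie pointwise above $\xi$.

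Fix $x \in \SS$ and, with $L := L_t^i(x)$, set
$$F_x(z) = \phi_i\Bigl(\sum_j W_{j \to i} \sum_{s=L}^{t-1} g_j(t-s)\, z_s(j),\ t - L\Bigr), \quad A_k(x) = \{z \in \SS : z(V_i(k)) = x(V_i(k))\}.$$
Then $r_{(i,t)}^{[k]}(1|x(V_i(k))) = \inf_{A_k(x)} F_x$ and $r_{(i,t)}^{[k]}(0|x(V_i(k))) = 1 - \sup_{A_k(x)} F_x$, so their sum equals $1 - \omega_k(x)$, where $\omega_k(x) := \sup_{A_k(x)} F_x - \inf_{A_k(x)} F_x$. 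Consequently $\alpha_{(i,t)}(k) = 1 - \sup_{x \in \SS} \omega_k(x)$, and since $A_k(x) \subset A_{k-1}(x)$ gives $\omega_k(x) \le \omega_{k-1}(x)$,
$$\lambda_{(i,t)}^\xi(k) = \sup_{x \in \SS} \omega_{k-1}(x) - \sup_{x \in \SS} \omega_k(x) \le \sup_{x \in \SS} \omega_{k-1}(x).$$

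To estimate $\omega_{k-1}(x)$, pick any $z, z' \in A_{k-1}(x)$. They coincide on $V_i(k-1)$, and because both lie in $\SS$ they satisfy $z_s(j), z'_s(j) \ge \xi_s(j)$, which forces $|z_s(j) - z'_s(j)| \le 1 - \xi_s(j)$ for $j \notin V_i(k-1)$. Plugging this into the Lipschitz bound (\ref{eq:Lip}) for $\phi_i$ yields
$$|F_x(z) - F_x(z')| \le \gamma \sum_{j \notin V_i(k-1)} |W_{j \to i}| \sum_{s = L_t^i(x)}^{t-1} g_j(t-s)(1 - \xi_s(j)),$$
and passing to the supremum over $z, z' \in A_{k-1}(x)$ transfers the same bound to $\omega_{k-1}(x)$.

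Finally, since any $x \in \SS$ has every spontaneous spike of $\xi$ among its real spikes, $L_t^i(x) \ge R_t^i$; because the summands are non-negative, extending the inner sum down from $L_t^i(x)$ to $R_t^i$ yields a bound uniform in $x$, which is exactly the first inequality of (\ref{eq:upperboundlambda2}). The second inequality then follows by replacing $g_j(t-s)$ with $\sup_j g_j(t-s)$ and $1 - \xi_s(j)$ with $1$. The main conceptual point is the oscillation identity, which replaces a potentially wasteful difference of two separate infima by the oscillation of a single function and so avoids the spurious factor $2$ that a naive term-by-term comparison of $r_{(i,t)}^{[k]}(a|\cdot)$ and $r_{(i,t)}^{[k-1]}(a|\cdot)$ would produce.
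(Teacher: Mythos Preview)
Your proof is correct. The paper states this lemma without proof (merely noting that it follows from condition (\ref{eq:Lip})), so there is nothing to compare against; your argument supplies the missing details. The key step is the oscillation identity $\sum_a r_{(i,t)}^{[k]}(a\mid x)=1-\omega_k(x)$, which gives $\lambda_{(i,t)}^\xi(k)=\sup_x\omega_{k-1}(x)-\sup_x\omega_k(x)\le\sup_x\omega_{k-1}(x)$ and, as you point out, avoids the spurious factor $2$ that a term-by-term estimate of $r^{[k]}(a\mid\cdot)-r^{[k-1]}(a\mid\cdot)$ would introduce. The subsequent Lipschitz bound, the inequality $|z_s(j)-z'_s(j)|\le 1-\xi_s(j)$ for $z,z'\in\SS$, and the enlargement of the time window from $L_t^i(x)$ down to $R_t^i$ are all correct.
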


We use this estimate in the

\begin{proof}{\bf of Theorem \ref{theo:3}}
We work conditionally on the realization of the process $ \xi_t (i ) , t \in \Z , i \in I .$ Take a couple $ ( i, t) , i \in I , t \in \Z ,$ such that $ \xi_t (i) = 0 .$ We have to show that it is possible to decide in a uniquely determined manner if $t$ will be a spike time or not. In order to achieve this decision, we have to calculate $ p_{(i,t)} ( \cdot | x ),$ where $x$ is the unknown history of the process. 

We will construct a sequence of sets $ (C_n^{(i,t)})_n ,$ $ C_n^{(i,t)} \subset I \times ]- \infty , t- 1] ,$ which contain the sets of sites and anterior spike times that have an influence on the appearance of a spike at time $t$ for neuron $i.$ The choice of these sets is based on the decomposition (\ref{eq:dec2}).

First, for any couple $ (j,s) $ with $ \xi_s (j) = 0 $, we choose, independently from anything else, an interaction neighborhood ${\cal V}_{(j,s)} = V_j (k) , k \geq -1 , $ with probability $ \lambda_{(j,s)}^\xi ( k) .$ Here, we put $V_j (-1) = \emptyset .$ A choice $k = -1 $ for a couple $ (j,s) $ implies that we can immediately decide to accept a spike at time $s$ for neuron $j$ with probability 
$$ \frac{r_{(j,s)}^{[-1]}(1) - \delta}{1 - \delta }$$
and to reject it with probability
$$\frac{r_{(j,s)}^{[-1]}(0) }{1 - \delta }.$$

We suppose that the choice of all ${\cal V}_{(j,s)}  $ is fixed. Then by (\ref{eq:dec2}), the decision concerning the couple $ (i,t)$ depends on the configuration of the past $ x_{L_t^i }^{t-1} ( {\cal V}_{(i,t)} ).$ Since we do not know $L_t^i , $ we use the a priori estimate on $ L_t^i $ which is given by $ R_t^i .$ Thus we consider the worst case in which we have to evaluate the past up to time $  R_t^i . $ 

In order to decide about the length $ t - L_t^i ,$ we have to assign values $0 $ or $1$ to all couples $ ( i, s ), R_t^i < s \le t .$ All these couples are influenced by their associated interaction region $ {\cal V}_{ (i,  s)} .$ So we consider
\begin{equation}\label{eq:ancestor}
C_1^{(i,t)} = \bigcup_{ s= R_t^i +1}^{t} \bigcup_{j \in {\cal V}_{(i,s)} \setminus \{ i\} } \; \bigcup_{ u =  R_t^i  }^{ s-1} \{ (j, u ) , \xi_j (u) = 0 \} .
\end{equation}

It is clear that if we know the values of all couples belonging to $ C_1^{(i,t)} $ then we are able to assign a value to any $(i,  s), R_t^i <  s \le t .$ Therefore, we call $C_1^{(i,t)}$ the set of ancestors of generation $1$ of $(i,t).$
Continuing this procedure, any couple $ ( j,s) \in C_1^{ (i,t)} $ itself has to be replaced by the set of its ancestors
$ C_1^{(j,s)} .$ We put 
\begin{equation}\label{eq:evol}
 C_2^{(i,t)} = \left( \bigcup_{ ( j, s ) \in C_1^{ (i,t)} }  C_1^{(j,s )}  \right)  \setminus C_1 ^{ (i,t)} ,
\end{equation}
where $C_1^{(j,s )}$ is defined as in (\ref{eq:ancestor}), and then recursively,
\begin{equation}
C_n^{(i,t)}= \left( \bigcup_{ ( j, s ) \in C_{n-1} ^{ (i,t)} }  C_1^{(j,s )}  \right)  \setminus \left( C_1^{ (i,t)} \cup \ldots \cup C_{n-1}^{(i,t)} \right) .
\end{equation}
We will show below that almost surely there exists a first time $n< \infty $ such that $C_n^{(i,t)} = \emptyset .$ Then necessarily $C_{n-1}^{(i,t)} $ consists only of couples $(j,s)$ which chose an interaction neighborhood of range $ - 1$ or which interact only with couples representing spontaneous spiking. Thus we can decide to accept or to reject a spike for any of them independently of anything else. Once the values associated to all elements of $ C_{n-1}^{(i,t)} $ are known, we can realize all decisions needed in order to attribute values to the elements of $C_{n-2}^{(i,t)} .$ In this way, we will be able to assign values in a recursive way to all ancestor sets up to the first set, $C_{1 }^{(i,t)} ,  $
which allows us finally to assign a value for neuron $i$ at time $t.$ We call this procedure the forward coloring procedure and we call $Y_t (i) $ the value of neuron $t$ at time $t$ obtained at the end of this procedure. 

$1.$ We first show that the probability constructed above is the law of $X_t(i)$ under the unique invariant measure $P.$ Put $C_\infty^{(i,t)} = \bigcup_{ n \geq 1 } C_n^{(i,t)} $ and let 
\begin{equation}\label{eq:tstop}
T^{(i,t)}_{STOP} = \inf \{ s : C_\infty^{(i,t)}  \subset  I \times [ t - s , t- 1 ] \} .
\end{equation}

Fix some initial space-time configuration $\eta \in \{ 0, 1 \}^{ I \times  - \N  }$ such that $ \eta_{0} ( i ) = 1 $ for all $i \in I.$ Let $X_t^\eta $ be the chain that evolves according to (\ref{eq:dyn}) and (\ref{eq:transition2}), conditionally on $ X_{- \infty}^0 = \eta .$ Recall that $Y_t (i)$ is the value obtained at the end of the above described forward coloring procedure. Then for any $ f: \{ 0, 1 \}  \to \R_+ ,$ 
\begin{eqnarray}
 E ( f ( X_t^\eta (i) )  &= &E ( f ( X_t^\eta (i) ), T^{(i,t)}_{STOP} < t  )
+ E ( f ( X_t^\eta (i) ) , T^{(i,t)}_{STOP} \geq t ) \nonumber \\
& =  & E ( f ( Y_t (i) ), T^{(i,t)}_{STOP} < t  ) \nonumber \\
&& \quad \quad  \quad \quad  \quad \quad  \quad \quad  + E (f ( X_t^\eta (i) ) , T^{(i,t)}_{STOP} \geq t ) .
\end{eqnarray}
But 
$$ E ( f ( X_t^\eta (i) ) , T^{(i,t)}_{STOP} \geq t ) \le \| f \|_{\infty} P ( T^{(i,t)}_{STOP} \geq t) \to 0 \mbox{ as } t \to \infty .$$ 
Hence we obtain that 
$$ \lim_{t \to \infty} E ( f ( X_t^\eta (i) ) ) = E ( f ( Y_t (i) ) ) ,$$
since $ 1_{\{   T^{(i,t)}_{STOP} < t \}  } \to 1 $ almost surely. 

This implies that $P $ is the unique invariant measure of the process. 

$2.$ We now show that the above procedure stops after a finite time almost surely. 
For that sake we put
\begin{equation}
N_{STOP}^{(i,t)} = \min \{ n : C_n^{(i,t)} = \emptyset \} .
\end{equation}
Our goal is to show that $ N_{STOP}^{(i,t)} < \infty $ almost surely. Notice that this implies that $ T_{STOP}^{(i,t)} < \infty $ as well. In order to do so, let
$$  | C_n^{(i,t)} | , n \geq 1 , $$
be the cardinal of the set of ancestors after $n$ steps of the above procedure. Then
\begin{equation}\label{eq:rough}
 P (  N_{STOP}^{(i,t)}  > n ) \le E (  | C_n^{(i,t)} | ) .
\end{equation}
As a consequence, it is sufficient to show that $E( | C_n^{(i,t)} |) \to 0 $ as $n \to \infty .$ For that sake we compare $( | C_n^{(i,t)} |)_n$ to a branching process
in random environment, where the environment is given by the i.i.d. field $  \xi .$ 

Recall the definition of $G(n) = \sum_{ m=1}^{n} \sup_i g_i (m)$ and the upper bound (\ref{eq:upperboundlambda2}). Thus for any $ k \geq 1,$ 
\[
 \lambda_{(i,t)} (k ) \le   \gamma \left( \sum_{ s = R_t^i }^{t-1} \sup_j g_j (t- s ) \right)    \sum_{ j \notin V_i (k-1) } | W_{ j \to i }|
=  \gamma  G ( t - R_t^i ) \sum_{ j \notin V_i (k-1) } | W_{ j \to i }| . 
\]
Let 
\begin{equation}\label{eq:lambdabar} 
\bar \lambda_{(i,t)} (k) =  \gamma  G ( t - R_t^i ) \sum_{ j \notin V_i (k-1) } | W_{ j \to i }| .
\end{equation}
Notice that $ \bar \lambda_{(i,t)} (k) $ depends only on the realization of the i.i.d. field $\xi$ through the value of $t- R_t^i .$

Since $\lambda_{(i,t)} (k ) \le \bar \lambda_{(i,t)} (k)  $ for all $ k \geq 1 ,$ it is possible, by standard branching process coupling arguments, to construct a sequence  $ (  \bar C_n^{(i,t)})_n $ coupled with the original sequence $ (  C_n^{(i,t)})_n $ satisfying the following properties.

\begin{enumerate}
\item
$\bar C_n^{(i,t)} $ is defined through (\ref{eq:ancestor}) and (\ref{eq:evol}) by using the family $ (\bar \lambda_{(i,t)} (k ) ) $ instead of $ ( \lambda_{(i,t)} (k ) ) .$
\item
For all $n,$  $  |C_n^{(i,t)}| \le |\bar C_n^{(i,t)}|.$
\end{enumerate}
Hence it is sufficient to show that $ E ( |\bar C_n^{(i,t)} | ) $ tends to $0$ as $n \to \infty .$ In order to do so, we will control the reproduction mean depending on the environment $\xi .$ Here, reproduction mean stands for the mean number of sites belonging to $ \bar C_1^{(j,s)},$ for any fixed couple $(j,s),$ where the expectation is taken with respect to the choices of the interaction neighborhoods $ \bar {\cal V}_{(i,s)}, $ conditionally on the realization of $\xi.$ More precisely, given $\bar C_n^{(i,t)} = c,$ the reproduction mean of any couple $ (j, s)$ belonging to $\bar C_n^{(i,t)} $ is given by
\begin{multline*}
 E^\xi \left( |\bar C_1^{(j,s)} |\,  \Big| \, (j,s) \in \bar C_n^{(i,t)} = c \right)  \\
 \le \sum_{ \tilde s = R_s^j + 1 }^s  \sum_{k \geq 1 } \bar \lambda_{(j,\tilde s)} (k ) \left( \sum_{ l \in V_j (k ) , l \neq j }  \sum_{ u = R_s^j }^{ \tilde s- 1} [1- \xi_{ u} ( l) ] 1 _{\{ (l, u ) \notin c \}} \right) ,
\end{multline*}
where $E^\xi $ denotes expectation conditionally on $\xi.$

In what follows we upper bound the above expression. We first use that, by definition (\ref{eq:lambdabar}), for $ \tilde s \le s,$ $ \bar \lambda_{(j,\tilde s)} (k )  \le \bar \lambda_{(j,  s)} (k ) .$ Therefore,
\begin{multline*}
  E^\xi \left( |\bar C_1^{(j,s)} |\,  \Big| \, (j,s) \in \bar C_n^{(i,t)} = c \right)
\\
\le  (s- R_s^j ) \sum_{k \geq 1 } \bar \lambda_{(j,  s)} (k ) \left( \sum_{ l \in V_j (k ) , l \neq j } \sum_{ u = R_s^j  }^{  s- 1} [1- \xi_{ u} ( l) ] 1 _{\{ (l, u ) \notin c \}} \right) .
\end{multline*}

Recalling the explicit form of $\bar \lambda_{(j,s)} (k)  $ in (\ref{eq:lambdabar}) and using moreover the upper bound 
$$ \sum_{ u = R_s^j  }^{  s- 1} [1- \xi_{ u} ( l) ] 1 _{\{ (l, u ) \notin c \}}  \le s - R_j^s\, , $$ 
on the event $ R_j^s < s- 1 ,$ we obtain 
\begin{multline}\label{eq:mjc}
 E^\xi \left( |\bar C_1^{(j,s)} | \, \Big| \, (j,s) \in \bar C_n^{(i,t)} = c \right)  \\
 \le  2 \gamma   
\left(   1_{\{  R_s^j < s-1 \} }  ( s - R_s^j )^2 
  G (s-R_s^j)   \sum_{ k \geq 1 } | V_j (k) | \left( \sum_{ l \notin V_j (k-1) } | W_{ l \to  j }|  \right) \right.\\
\left. + 1_{\{  R_s^j =  s-1 \} }    G  (1)   \sum_{ k \geq 1 } \left( \sum_{ l \notin V_j (k-1) } | W_{ l \to  j }|  \right)   \sum_{ l \in V_j (k ) , l \neq j }  ( 1 - \xi_{s-1}  (l ) ) 1 _{\{ (l, s-1 ) \notin c \}}  \right)
 \\
= : \bar m_{(j,s)}^{ c }.
\end{multline}
Observe that $\bar m_{(j,s)}^{ c }$ depends on $\xi,$ but to avoid too cumbersome notation, we omit the superscript $\xi .$ 

Taking conditional expectation, conditionally with respect to $\xi, $ we get
\begin{equation}\label{eq:quenched}
 E^\xi \left( |\bar C_n^{(i,t)} | \,  \Big| \, \bar C_{n-1}^{(i,t)}  \right) \le \sum_{ (j, s ) \in  \bar C_{n-1}^{(i,t)}} \bar m^{  \bar C_{n-1}^{(i,t)} }_{ (j,s)} .
\end{equation}

Therefore,  taking expectation with respect to $\xi ,$
\begin{equation}\label{eq:tobeit}
E( |\bar C_n^{(i,t)} | ) \le \sum_{(j,s)} E \left( 1_{(j,s) \in  \bar C_{n-1}^{(i,t)} }  \bar m^{  \bar C_{n-1}^{(i,t)} }_{ (j,s)} \right)  .
\end{equation}
Now note that the event
$$ \{ (j,s) \in \bar C_{n-1}^{(i,t)}  \} = \{ \exists (k, \tilde s ) \in \bar C_{n-2}^{(i,t)}  : \tilde s > s , k \neq j,  j \in \bar{\cal V}_{(k, \tilde s )} \} $$
depends only on $ R_{\tilde s }^k .$ Hence, recalling (\ref{eq:mjc}), the above event is independent of $\bar m_{(j,s)}^{ \bar C_{n-1}^{(i,t)}} $ which depends only on $ s - R_s^j .$ As a consequence,
$$  E \left( 1_{(j,s) \in  \bar C_{n-1}^{(i,t)} }  \bar m^{  \bar C_{n-1}^{(i,t)} }_{ (j,s)} \right) = P \left( (j,s) \in  \bar C_{n-1}^{(i,t)} \right) E (\bar m^{  \bar C_{n-1}^{(i,t)} }_{ (j,s)} ) .$$
But
\begin{multline}\label{eq:abm}
  E( \bar m^{  \bar C_{n-1}^{(i,t)} }_{ (j,s)}   )    \le
  2 \gamma   
\left[ \sum_{n=2}^\infty \delta (1 - \delta)^{ n-1} n^2 G( n) \sum_{ k \geq 1 }  | V_j (k) | \left( \sum_{ l \notin V_j (k-1) } | W_{ l \to  j }|  \right)  \right. \\
 \left. +\delta  G  (1)   \sum_{ k \geq 1 } \left( \sum_{ l \notin V_j (k-1) } | W_{ l \to j }|  \right) 
  | V_j (k ) | (1 - \delta)    \right] .
\end{multline}
We put
\begin{equation}\label{eq:cgamma}
 C_\gamma := 2 \gamma \sup_j \sum_{ k \geq 1 } | V_j (k) | \left( \sum_{l \notin V_j (k-1) }  | W_{l \to j}| \right) 
\end{equation}
and define 
$$E(G, \delta ) = G(1) + \sum_{ n = 2 }^{ \infty} ( 1 - \delta)^{n-2} n^2 G(n) .$$ 
Then
\begin{equation}\label{eq:ub3}
  E[ \bar m^{  \bar C_{n-1}^{(i,t)} }_{ (j,s)}   ]    \le      C_\gamma \;  (1- \delta) E( G, \delta) =: e(\delta) .
\end{equation}
Note that $e(\delta) $ is decreasing as a function of $ \delta $ and tends to $0$ for $ \delta \to 1.$ In particular, if we put 
\begin{equation}\label{eq:deltastar}
\delta^* = \inf \{ \delta \in ] 0, 1 [ : e ( \delta ) \le 1 \}, 
\end{equation}
then $e(\delta ) < 1 $ for all $ \delta \geq \delta_* . $
Now we may iterate (\ref{eq:tobeit}) and (\ref{eq:ub3}) and obtain
\begin{equation}\label{eq:geo}
 E ( |\bar C_n^{(i,t)} |  ) \le \sum_{ (j,s) } E ( 1_{\{ (j,s) \in \bar C_{n-1}^{(i,t)} \} } )e(\delta) = E ( |\bar C_{n-1}^{(i,t)} | ) e(\delta ) \le e(\delta)^n  .
\end{equation}
Since $e(\delta )^n \to 0 $ as $n \to \infty $ for $ \delta \geq \delta_* , $ this implies our result.
\end{proof}

\begin{rem}
Note that the value of $\delta^* $ given in (\ref{eq:deltastar}) can be explicitly calculated depending on the specific structure of the aging functions $g_j ( s) .$ For instance, if $ g_j ( s) = 1 $ for all $j, s, $ then $G(n) = n $ for all $n,$ and the value of $\delta^* $ follows from standard evaluations of the third moment of a geometrical distribution.
\end{rem}

\begin{rem}
The above proof uses a non-trivial extension of the so-called Clan of ancestors method employed by Fern\`andez et al. (2001)\nocite{MR1849182} and Fern\`andez et al. (2002)\nocite{ferr:fern:garc:2002}. In these papers the authors study the Clan of ancestors of a given vertex (or object) in a random system. They prove that this set is almost surely finite and deduce a perfect simulation algorithm.
\end{rem}

\begin{proof}{\bf of Corollary \ref{cor:mixing} part 1.}
We prove the first part of Corollary \ref{cor:mixing}.
We keep the notation of the proof of Theorem \ref{theo:3}. Put $C_\infty^{(i, t)} = \bigcup_n C_n^{(i,t)} .$ Then the random variable $ T_{STOP}^{(i,t)} $ defined in (\ref{eq:tstop}) can be trivially upper bounded by 
\begin{equation}\label{eq:total}
 T_{STOP}^{(i,t)} \le   |C_\infty^{(i,t)} | ,
\end{equation}
which is the total number of elements appearing in the ancestor process. This is a very rough upper bound on the number of steps that we have to look back into the past in order to choose a value for $ X_i (t).$ 

By construction, the value $X_i (t) $ depends on all choices of interaction regions $ {\cal V}_{ (j, s)} $ such that $ (j, s) \in C_\infty^{(i, t)} $ and on the values of the i.i.d. Bernoulli field $ \xi_u $ for $ t - T_{STOP}^{(i,t)}  \le u \le t .$ Moreover, for any $ (j,s) \in C_\infty^{(i, t)} , $ a random decision has to be made whether to associate the value $ + 1 $ or $0$ to this couple. These decisions can be realized based on a sequence of i.i.d. uniform random variables $ (U_t (i), i \in I, t \in \Z ),$ which are uniform on $[0, 1 ] .$ As a consequence, $X_t (i) $ is measurable with respect to the sigma-algebra
$$ \sigma \{ {\cal V}_{ (j, s)} , U_s (j ) : (j, s) \in C_\infty^{(i, t)}  , \xi_u : t - T_{STOP}^{(i,t)}  \le u \le t \}  .$$
Writing 
\begin{equation}
 R = \inf_{ u \in [s, t]} ( u - T_{STOP}^{(i,u)} ) , 
\end{equation}
we deduce that 
\begin{equation}\label{eq:ms}
\mbox{
$X_s^t (i) $ is $ \sigma \{  {\cal V}_{ (j, u)} ,U_u (j),  \xi_u (j ) : j \in I, R  \le u \le t \} -$measurable.}
\end{equation} 
Let $ A \in \F_0 .$ We write $E^\xi$ for conditional expectation, conditionally with respect to $\xi .$ Then we have 
\begin{eqnarray*}
E [ f ( X_s^t (i)) 1_A ] &=& E [ f ( X_s^t (i)) 1_A 1_{ \{ R > 0 \}} ] + E [ f ( X_s^t (i)) 1_A 1_{ \{ R \le 0 \}} ]\\
&=& E [ E^\xi [  f ( X_s^t (i)) 1_A 1_{ \{ R > 0 \}} ] ] +
E [ E^\xi [ f ( X_s^t (i)) 1_A 1_{ \{ R \le 0 \}} ]] .
\end{eqnarray*}
Clearly, $ f ( X_s^t (i)) 1_{\{ R > 0 \}} $  and $ A$ are independent, which follows from (\ref{eq:ms}). Hence 
$$ E^\xi [ f ( X_s^t (i)) 1_A ] = E^\xi [ f( X_s^t (i)) 1_{ \{ R > 0 \}} ] P^\xi (A) + E^\xi [ f(X_s^t (i)) 1_A 1_{\{ R \le 0 \} } ] .$$
We take expectation with respect to $\xi $ and use the fact that $ E^\xi [ f( X_s^t (i)) 1_{ \{ R > 0 \}} ] $ is measurable with respect to $ \sigma \{ \xi_u : 1 \le u \le t \} $ and 
$ P^\xi (A) $ is measurable with respect to $\sigma \{ \xi_u : u \le 0 \} .$ Hence by independence
$$ E [  f ( X_s^t (i)) 1_A ] = E [ f( X_s^t (i)) 1_{ \{ R > 0 \}} ] P( A) + E [ f(X_s^t (i)) 1_A 1_{\{ R \le 0 \} } ],$$
and therefore
$$
E [  f ( X_s^t (i)) 1_A ]  - E[ f (X_s^t (i))] P(A) =   E [ f(X_s^t (i)) 1_A 1_{\{ R \le 0 \} } ] - E [ f( X_s^t (i)) 1_{ \{ R \le 0 \}} ] P( A) .
$$
As a consequence,
\begin{eqnarray*}
 \left| E [  f ( X_s^t (i)) 1_A ]  - E[ f (X_s^t (i))] P(A) \right| 
& \le& \| f \|_\infty P( A) \left[ P ( R\le 0 | A ) + P (R\le 0 ) \right] \\
&\le & \| f \|_\infty P( A) \left[ P ( R\le 1 | A ) + P (R\le 0 ) \right]  \\
&\le & 2 \| f \|_\infty P( A) P( R \le 1 ) .
\end{eqnarray*}
Here we have used that $ \{ R \le 1 \}$ is independent of $A.$ Indeed, the event $\{ R \le 1 \} $ does only depend on choices of ${\cal V}_{ (j,s)} $ and $\xi_s (j) $ having time component $ s \geq 1 .$ Now we conclude as follows. By definition of $R,$ 
$$ P ( R \le 1 ) \le \sum_{ u \in [s, t]} P ( T_{STOP}^{ (i,u)} \geq s - 1 ) . 
$$
Using (\ref{eq:total}), we obtain
$$ P ( T_{STOP}^{ (i,u)} \geq s - 1 ) \le P (  |C_\infty^{(i,u)} | \geq s - 1 )\le E \left(  |C_\infty^{(i,u)} |\right) \frac{1}{ s-1} \le \frac{1}{ 1 - e(\delta)} \frac{1}{ s-1}  , $$
since
$$ E \left(  |C_\infty^{(i,u)} |\right) \le \sum_{ n= 1 }^\infty E ( |C^{(i,u)}_n| ) \le \frac{1}{ 1 - e(\delta)}  ,$$
where we have used (\ref{eq:geo}). This implies the result.
\end{proof}

\section{A space-time Kalikow-type decomposition and proof of Theorem \ref{theo:2}}\label{sec:4}
The additional condition (\ref{eq:veryveryfast}) allows to introduce a space-time Kalikow-type decomposition without the assumption of existence of spontaneous spikes. We will decompose with respect to increasing space-time neighborhoods $ V_i (k ) \times [- k - 1 , - 1 ] , k \geq - 1 ,$ where $ V_i ( - 1 ) = \emptyset.$  Write ${\cal S} = \{0, 1 \}^{I \times \Z} $ for the state space of the process and introduce, analogously to (\ref{eq:314})--(\ref{eq:317}), 
\begin{equation}
r_i^{[-1]} ( 1) = \inf_{ x \in {\cal S}} \Phi_i ( \sum_j W_{ j \to i } \sum_{ s = L_0^i (x) }^{-1} g_j (-s) x_s (j)  ) ,
\end{equation}
\begin{equation}
r_i^{[-1]} ( 0) = \inf_{ x \in {\cal S}} \left( 1 - \Phi_i ( \sum_j W_{ j \to i } \sum_{ s = L_0^i (x) }^{-1} g_j (-s) x_s (j)  ) \right) ,
\end{equation}
\begin{equation}
r_i^{[0]} ( 1 |x) = \inf_{ z\in {\cal S} : z_i ( - 1) = x_i (-1) } \Phi_i ( \sum_j W_{ j \to i } \sum_{ s = L_0^i (z) }^{-1} g_j (-s) z_s (j)  ) ,
\end{equation}
\begin{equation}
r_i^{[0]} ( 1| x) = \inf_{ z \in {\cal S} : z_i ( - 1) = x_i (-1)  } \left( 1 - \Phi_i ( \sum_j W_{ j \to i } \sum_{ s = L_0^i (z) }^{-1} g_j (-s) z_s (j)  ) \right) ,
\end{equation}
and then for any $ k \geq 1 , $ 
\begin{equation}
r_i^{[k]} ( 1 |x) = \inf_{ z\in {\cal S} : z ( V_i (k) \times [- k  - 1 , - 1 ] ) =  x( V_i (k) \times [- k - 1 , - 1 ] )   } \Phi_i ( \sum_j W_{ j \to i } \sum_{ s = L_0^i (z) }^{-1} g_j (-s) z_s (j) ) ,
\end{equation}
\begin{equation}
r_i^{[k]} ( 0 |x) = \inf_{ z\in {\cal S} : z ( V_i (k) \times [- k  - 1 , - 1 ] ) =  x( V_i (k) \times [- k - 1 , - 1 ] )   } \left( 1 - \Phi_i ( \sum_j W_{ j \to i } \sum_{ s = L_0^i (z) }^{-1} g_j (-s) z_s (j) ) \right)  .
\end{equation}
Putting 
$$ \alpha_i (- 1) = \lambda_i ( - 1 ) = r_i^{[-1]} ( 1)  + r_i^{[-1]} ( 0 ) , $$
\begin{equation}\label{eq:lambdakbis}
 \alpha_i ( k ) = \inf_x \left(  r_i^{[k]} ( 1 |x) + r_i^{[k]} ( 0  |x) \right) , \lambda_i (k ) = \alpha_i ( k ) - \alpha_i ( k- 1 ) , k \geq 0 , \end{equation}
we obtain the following space-time Kalikow-type decomposition for 
$$ p_{(i,t)} ( 1 | x ) = \Phi_i \left( \sum_{ j } W_{j \to i } \sum_{ s= L_t^i (x)}^{t- 1 } g_j (t-s) x_s (j)  \right) .$$
\begin{prop}
Under the conditions (\ref{eq:summable}), (\ref{eq:Lip}) and (\ref{eq:veryveryfast}), for any $ i \in I,$ the above defined quantities $\lambda_i (k ) $ take values in $[0, 1 ]$ and 
$$ \sum_{ k \geq -1} \lambda_i ( k) = 1 .$$
Moreover, there exists a family of conditional probabilities $ (p_{(i,t)}^{[k]} ( a |x ) )_{ k \geq 0 } $ satisfying the following properties.
\begin{enumerate}
\item
For all $a,$ $ {\cal S} \ni x \mapsto p_{(i,t)}^{[0] } ( a | x ) $ depends only on the variable $ x_{t- 1 } (i) .$ 
\item
For all  $a,$ $ k \geq 1 ,$  $ {\cal S} \ni x \mapsto p_{(i,t)}^{[k] } ( a | x ) $ depends only on the variables $ (x_s ( j ):t - k - 1  \le s \le t-1 , j \in V_i (k) ) .$ 
\item
For all $x \in  {\cal S} , $ $ k \geq 0,$ $ p_{(i,t)}^{[k] } ( 1 | x ) \in [0, 1 ], $ $ p_{(i,t)}^{[k] } ( 0| x )  +  p_{(i,t)}^{[k]} ( 1 | x ) = 1 .$
\item
For all $x \in {\cal S},$ we have the following convex decomposition
\begin{equation}\label{eq:dec3}
p_{(i,t)} ( a | x )  = \lambda_{i} ( -1 ) p_{(i,t) }^{[-1]  } ( a) + 
\sum_{ k \geq 0 } \lambda_{i } ( k ) p_{(i,t) }^{[k]  } ( a| x) ,
\end{equation}
where
$$ p_{(i,t) }^{[-1] } ( a) = \frac{ r_{i}^{[-1]} ( a)}{ \lambda_{i} ( -1 ) } .$$
\end{enumerate}
\end{prop}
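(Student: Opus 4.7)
The plan is to mirror the proof of Proposition \ref{prop:1}, which was already executed for the conditional decomposition, replacing the role of the conditioning field $\xi$ by the combined use of spatial truncation (via $V_i(k)$) and temporal truncation (via $[-k-1,-1]$). The main structural work is identical: once I know that $\alpha_i(k)\nearrow 1$, the construction of the probability kernels $p^{[k]}_{(i,t)}(a|x)$ follows verbatim from the telescoping/partitioning machinery in \eqref{eq:dec1}--\eqref{eq:pik}, with $\lambda_i(k)$ in place of $\lambda_{(i,t)}^\xi(k)$ and $V_i(k)\times[-k-1,-1]$ in place of $V_i(k)$.

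The genuinely new step is therefore to show $\lim_{k\to\infty}\alpha_i(k)=1$, i.e.\ that
\[
\sup_{x\in\mathcal S}\bigl|p_{(i,t)}(a|x)-r_i^{[k]}(a|x)\bigr|\xrightarrow[k\to\infty]{}0.
\]
I would proceed by fixing $x\in\mathcal S$ and taking $z\in\mathcal S$ with $z(V_i(k)\times[-k-1,-1])=x(V_i(k)\times[-k-1,-1])$. Using the Lipschitz assumption \eqref{eq:Lip}, the difference of the two values of $\Phi_i$ is bounded by $\gamma\sum_j|W_{j\to i}|\sum_{s}g_j(-s)|x_s(j)-z_s(j)|$, where the inner sum ranges over $s\geq\min(L_0^i(x),L_0^i(z))$. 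Since $x$ and $z$ agree on $V_i(k)\times[-k-1,-1]$, the discrepancies $|x_s(j)-z_s(j)|\neq 0$ can only arise when either $j\notin V_i(k)$ (at arbitrary times $s\leq -1$) or $j\in V_i(k)$ and $s\leq -k-2$. This gives the uniform estimate
\[
\sup_{x\in\mathcal S}\bigl|p_{(i,t)}(a|x)-r_i^{[k]}(a|x)\bigr|
\leq \gamma\Bigl[\sum_{j\notin V_i(k)}|W_{j\to i}|\sum_{n\geq 1}g_j(n)+\sum_{j\in V_i(k)}|W_{j\to i}|\sum_{n\geq k+1}g_j(n)\Bigr].
\]

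Condition \eqref{eq:veryveryfast} asserts that $\sum_{k\geq 0}(k+1)|V_i(k)|b_k(i)<1/\gamma$, where $b_k(i)$ denotes exactly the bracket appearing in that hypothesis, with $V_i(k-1)$ inside. A short bookkeeping step shows that our bracket above is dominated by $b_k(i)+b_{k+1}(i)$ (splitting $V_i(k)=V_i(k-1)\cup(V_i(k)\setminus V_i(k-1))$ and bounding $\sum_{n\geq k+1}g_j(n)\leq \sum_{n\geq 1}g_j(n)$ on the annulus). Since $(k+1)|V_i(k)|\geq 1$ and the series in \eqref{eq:veryveryfast} converges, $b_k(i)\to 0$, giving $\alpha_i(k)\nearrow 1$. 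Hence $\sum_{k\geq -1}\lambda_i(k)=1$ and each $\lambda_i(k)\in[0,1]$ by monotonicity of $\alpha_i$.

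With $\alpha_i(k)\nearrow 1$ in hand, I would close by reproducing the construction of Proposition \ref{prop:1}: set $p_{(i,t)}^{[-1]}(a)=r_i^{[-1]}(a)/\lambda_i(-1)$ and, for $k\geq 0$, define $\tilde\lambda_i(k,x)=\sum_a\Delta_i^{[k]}(a|x)$ with $\Delta_i^{[k]}(a|x)=r_i^{[k]}(a|x)-r_i^{[k-1]}(a|x)$, partition each interval $(\alpha_i(k-1),\alpha_i(k)]$ into sub-intervals where the configuration-dependent cumulative mass $\alpha_i(\cdot,x)$ falls, and average the corresponding $\tilde p_i^{[\ell]}$ with weights $\lambda_i(k)$, exactly as in \eqref{eq:pik}. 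Measurability of $p_{(i,t)}^{[k]}$ on the prescribed space-time window follows by construction, and the convex identity \eqref{eq:dec3} is obtained by multiplying by $\lambda_i(k)$ and summing. The only real obstacle is the tail bound above; once that is in place the rest is bookkeeping identical to the proof of Proposition \ref{prop:1}.
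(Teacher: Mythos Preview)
Your proposal is correct and follows exactly the route the paper takes: the paper simply states that ``the proof of this proposition follows the lines of the proof of Proposition \ref{prop:1}'', and the estimates \eqref{eq:upperboundlambda1new}--\eqref{eq:upperboundlambda2new} that immediately follow the proposition are precisely the tail bounds you derive to show $\alpha_i(k)\nearrow 1$. Your write-up is in fact more detailed than the paper's, but the argument is the same.
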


The proof of this proposition follows the lines of the proof of Proposition \ref{prop:1}. Moreover, the following estimates hold.
\begin{equation}\label{eq:upperboundlambda1new}
\lambda_{i} ( 0) \le  \gamma   \sum_{ j  }  | W_{j \to i } |  \sum_{ n = 1}^{\infty} g_j (n)    ,
\end{equation}
and for all $ k \geq 1, $ 
\begin{equation}\label{eq:upperboundlambda2new}
\lambda_{i } ( k) \le   \gamma \left( \sum_{ j \notin V_i (k-1 ) }  | W_{j \to i } | \sum_{ n = 1  }^{\infty} g_j (n) + \sum_{ j \in V_i (k-1 ) }  | W_{j \to i } | \sum_{ n = k  }^{\infty} g_j (n) \right).
\end{equation}

\begin{proof}{\bf of Theorem \ref{theo:2}}
We construct again a sequence of sets $ (C_n^{(i,t)})_n  \subset I \times ]- \infty , t- 1] $ which contain the sets of sites and anterior spike times that have an influence on the appearance of a spike at time $t$ for neuron $i.$ The choice of these sets is based on the decomposition (\ref{eq:dec3}).

First, we choose for any couple $ (j,s) $, independently from anything else, a space-time interaction neighborhood ${\cal O}_{(j,s)}  \subset I \times [ - \infty, s- 1 ] $ 
$${\cal O}_{(j,s)} = \left\{ 
\begin{array}{ll}
V_j (k) \times [ s- k- 1 , s- 1 ]& \mbox{ with probability $ \lambda_{j}( k)  , k \geq 1 ,$} \\
\{ j \} \times \{ s- 1 \} & \mbox{ with probability $ \lambda_{j}( 0) $} \\ 
\emptyset & \mbox{ with probability $ \lambda_{j}( -1 ) .$}
\end{array}
\right. $$
Then we put
\begin{equation}
C_1^{(i,t)} = {\cal O}_{(i,t)} , \; C_n^{(i,t)} = \left( \bigcup_{ (j, s) \in C_{n-1}^{(i,t)}} {\cal O}_{(j,s)}  \right) \setminus \left( C_1^{(i,t)} \cup \ldots \cup C_{n-1}^{(i,t)}\right) .
\end{equation}
The process $ | C_n^{(i,t)} | $ can be compared to a classical multi-type branching process with reproduction mean 
$$ m_i = \lambda_i (0 ) + \sum_{ k \geq 1 } (k+1)  | V_i (k ) | \lambda_i ( k ) .  $$
Now, condition (\ref{eq:veryveryfast}) together with the estimates (\ref{eq:upperboundlambda1new}) and (\ref{eq:upperboundlambda2new}) shows that $ m := \sup_i m_i < 1 .$ As a consequence, 
$$   P( N_{STOP}^{(i,t) } > n ) \le E (| C_n^{(i,t)} |) \le m^n \to 0 \mbox{ as } n \to \infty , $$
and this finishes the proof.
\end{proof}

\begin{proof}{\bf of Corollary \ref{cor:mixing}}
The proof of the first part of Corollary \ref{cor:mixing} is analogous to the proof under the conditions of Theorem \ref{theo:3}. We give the proof of the second part of the Corollary. We keep the notation of the proof of Theorem \ref{theo:2}. We define the projection on the time coordinate
\begin{equation}
T( {\cal O}_{(j,s)}) := s- k- 1 \mbox{ if  } {\cal O}_{(j,s)} = V_j (k) \times [s- k - 1  , s- 1 ] , k \geq 0 ,  T( {\cal O}_{(j,s)}) := s \mbox{ else.}
\end{equation}
Define recursively
$$ T_1^{(i,t)} = T( {\cal O}_{(i,t)}) , T_n^{(i,t)} = \min \{ T( {\cal O}_{(j,s)}) : (j,s) \in C_{n-1}^{(i,t)} \} , n \geq 1 ,$$
and let finally
$$ T^{(i,t) }_{STOP} := T_{N^{(i,t)}_{STOP}- 1 }^{(i,t) } .$$
Then as in the proof of the first part of the corollary,
$$ \left| E [  f ( X_s^t (i)) 1_A ]  - E[ f (X_s^t (i))] P(A) \right| 
\le  2 \| f \|_\infty P( A) P( R \le 1 ) ,$$
where 
$$ R = \inf_{ u \in [s, t ]} T_{STOP}^{(i,u)} .$$
We have
$$ P ( R \le 1 ) \le \sum_{ u \in [s, t ] } P( T^{(i,u)}_{STOP} \le 1 ) ,$$
where
$$ P (  T^{(i,u)}_{STOP} \le 1 )  = P ( u-  T^{(i,u)}_{STOP} \geq  u- 1 ) .$$
In what follows, $C$ will denote a constant that might change from line to line, but that does not depend on $\beta .$ 

We wish to compare $u- T^{(u,t) }_{STOP}$ to the total offspring of a classical Galton-Watson branching process. In order to do so, 
notice first 
that using (\ref{eq:upperboundlambda1new}) and (\ref{eq:upperboundlambda2new}), we get for any $ k \geq 1 , $ 
$$ \lambda_i ( k )\le C \frac{e^{ - \beta (k-1) }}{1 - e^{ - \beta } } =: \bar \lambda (k)  ,$$
where we have used (\ref{eq:exponentialg}). Moreover, 
$$ \lambda_i ( 0 ) \le  C \frac{e^{ - \beta  }}{1 - e^{ - \beta } } =: \bar \lambda (0 )  .$$
It is immediate to see that for $\beta $ sufficiently large, $ \sum_{ k \geq 0} \bar \lambda (k) < 1 ,$ since 
$$ \sum_{ k \geq 0} \bar \lambda (k) = \frac{C}{ 1 - e^{ - \beta } } \left[ e^{ - \beta} + \frac{e^{- \beta}}{1 - e^{ - \beta } } \right] \to 0 $$
as $\beta \to \infty .$ 
Therefore we can couple $u- T^{(i,u) }_{STOP}  $ with the total offspring $ {\cal T}$ of a classical Galton-Watson process, where each particle has $k + 1 $ offspring, $ k \geq 0 ,$ with probability $ \bar \lambda  (k ) , $ and offspring $ 0$ with probability $ 1 - \sum_{ k \geq 0 } \bar \lambda ( k) .$ This coupling can be done such that $u- T^{(i,u) }_{STOP}  \le {\cal T} .$ Thus it is sufficient to evaluate the law of the total offspring ${\cal T}.$ For that sake let $W_n$ be a random walk starting from $0$ at time $0, $ with step size distribution $ \eta , $ where 
$$ \eta  = \left\{ 
\begin{array}{ll}
- 1 & \mbox{ with probability }  1 - \sum_{ k \geq 0 } \bar \lambda ( k) \\
k & \mbox{ with probability } \bar \lambda (k ) , k \geq 0 .
\end{array}
\right. $$
Then $ {\cal T} \stackrel{\cal L}{=} T_{-1} = \inf \{ n : W_n = - 1 \} .$ Hence for any $\lambda \in ] 0 , \beta [  , $ since $ u \geq s,$
\begin{multline*}
 P ( u-  T^{(i,u)}_{STOP} \geq  u- 1 ) \le P ( T_{ - 1 } \geq u - 1 )  \le P ( T_{ - 1 } \geq s - 1 ) = P ( T_{ - 1 } > s - 2 )\\
 \le P (  e^{\lambda W_{ s- 2 }} \geq 1 ) \le \varphi ( \lambda )^{s-2} , 
\end{multline*}
where 
\begin{equation}\label{eq:onlyg}
\varphi ( \lambda ) = E ( e^{\lambda \eta })  = e^{ - \lambda } [ 1 - \sum_{ k \geq 0 } \bar \lambda ( k)] + \frac{C e^{ - \beta}}{1 - e^{ - \beta} } + 
\frac{C}{1-e^{ - \beta} } \frac{e^{ \lambda -  \beta }}{1 - e^{ \lambda - \beta}} .
\end{equation}
Now, fix $ \lambda = 1 , $ then there exists $\beta_* $ such that for all $\beta \geq \beta_*, $ $\varphi ( 1) < 1 .$ Putting $ \varrho = \varphi ( 1 ) $ and $ C = \varrho^{ - 2 } $ yields the desired result.
\end{proof}

\section{Proof of Theorem \ref{theo:4}}\label{sec:proofstat}
In order to prove Theorem \ref{theo:4}, we introduce the sequence of sets
$$ {\cal V}^1_{i \to \cdot }  = \{ j : W_{ i \to j } = 1 \} , \ldots , {\cal V}^n_{i \to \cdot } = \{ j : \exists k \in {\cal V}^{n-1}_{i \to \cdot } : W_{ k \to j } = 1 \} , n \geq 2 .$$
Note that $j \in {\cal V}_{i \to \cdot}$ if and only if neuron $i$ has a direct influence on the spiking behavior of neuron $j.$
We put 
$$ \tau^i = \inf \{ n : i \in {\cal V}^n_{i \to \cdot } \} .$$
This is the first time that an information emitted by neuron $i$ can return to neuron $i$ itself.

Recall that $\lambda =  1 + \vartheta / N $ and define $ \mu =  \frac{N-2}{N} \lambda .$ We have the following lower bound.

\begin{prop}\label{prop:tau}
For any $k$ the following inequality holds.
$$ \tilde P ( \tau^i \le  k ) \le \frac{k-1}{N} \, \exp \left({ \vartheta \frac{k}{N} }\right).$$
\end{prop}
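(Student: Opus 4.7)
The plan is to identify $\tau^i$ with the length of the shortest directed simple cycle through $i$ in the random graph, and then to bound the probability of having such a short cycle by a union bound combined with a first-moment estimate.

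First I would prove the identification
\[
\tau^i = \inf \{ m \geq 2 : \text{there is a directed simple cycle through $i$ of length $m$} \}.
\]
One direction is immediate: a simple cycle $i \to v_1 \to \cdots \to v_{m-1} \to i$ is a walk of length $m$ showing $i \in {\cal V}^m_{i \to \cdot}$, hence $\tau^i \leq m$. For the converse, any walk $i = u_0 \to u_1 \to \cdots \to u_n = i$ witnessing $i \in {\cal V}^n_{i \to \cdot}$ can be reduced to a directed simple cycle through $i$ of length at most $n$ by iteratively short-circuiting internal sub-loops, so the length of the shortest simple cycle is at most $\tau^i$. Note that since $W_{ii} \equiv 0$, any such cycle has length at least $2$.

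Next I would apply a union bound over cycle lengths together with Markov's inequality. Writing $N_n^i$ for the number of directed simple cycles of length $n$ through $i$, the identification gives
\[
\tilde P(\tau^i \leq k) \leq \sum_{n=2}^{k} \tilde P(N_n^i \geq 1) \leq \sum_{n=2}^{k} \tilde E[N_n^i].
\]
A directed simple cycle of length $n$ through $i$ is specified by an ordered tuple $(v_1, \ldots, v_{n-1})$ of distinct vertices in $\{1, \ldots, N\} \setminus \{i\}$ together with the presence of the $n$ edges $i \to v_1, v_1 \to v_2, \ldots, v_{n-1} \to i$. Since these $n$ edges are independent Bernoulli variables of parameter $p_N = \lambda/N$, we get
\[
\tilde E[N_n^i] = (N-1)(N-2) \cdots (N-n+1) \, p_N^n = \frac{\lambda}{N} \prod_{j=1}^{n-1} \frac{(N-j)\lambda}{N}.
\]

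Finally I would use the elementary bounds $(N-j)/N \leq 1$ and $\lambda = 1 + \vartheta/N \leq e^{\vartheta/N}$ to obtain $\tilde E[N_n^i] \leq \lambda^n / N \leq N^{-1} e^{n \vartheta / N}$. Summing the $k-1$ terms from $n=2$ to $n=k$ and bounding each by its largest value then gives
\[
\tilde P(\tau^i \leq k) \leq \sum_{n=2}^{k} \frac{e^{n \vartheta / N}}{N} \leq \frac{k-1}{N} e^{k \vartheta / N},
\]
which is the stated bound. The only conceptually nontrivial step is the first one, the identification of $\tau^i$ with the shortest simple-cycle length; once that is in place, the rest is elementary enumeration plus the inequality $1 + x \leq e^x$.
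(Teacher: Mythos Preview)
Your proof is correct. The identification of $\tau^i$ with the length of the shortest directed simple cycle through $i$ is sound: the minimal closed walk through $i$ must be simple (otherwise short-circuiting an internal repetition produces a strictly shorter closed walk through $i$), and the converse is immediate. After that, the first-moment count $\tilde E[N_n^i] = (N-1)\cdots(N-n+1)\,p_N^n \le \lambda^n/N$ together with the union bound over $n=2,\ldots,k$ gives exactly the stated inequality.

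The paper proceeds quite differently. It does \emph{not} count cycles. Instead it introduces the modified reachable sets $\tilde{\cal V}^n_{i \to \cdot}$ (forbidding the return to $i$), observes that none of the variables $W_{\cdot \to i}$ are used in their construction, and conditions on these sets to obtain
\[
\tilde P(\tau^i > k) = \tilde E\Big[(1-p_N)^{\,|\bigcup_{n\le k-1}\tilde{\cal V}^n_{i\to\cdot}|}\Big].
\]
It then couples $|\tilde{\cal V}^n_{i\to\cdot}|$ with a Galton--Watson process of offspring mean $\mu=(N-2)\lambda/N$, applies the convexity bound $\tilde E(s^{\Sigma_{k-1}}) \ge 1 + \tilde E(\Sigma_{k-1})(s-1)$, and bounds $\tilde E(\Sigma_{k-1}) \le (k-1)\lambda^{k-1}$, arriving at $\tilde P(\tau^i \le k) \le p_N(k-1)\lambda^{k-1} = (k-1)\lambda^k/N$, the same intermediate quantity you reach before applying $\lambda^k \le e^{\vartheta k/N}$. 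Your route is more elementary and combinatorial; the paper's route is more probabilistic, exploiting the independence of incoming and outgoing edges at $i$ and branching-process machinery. Both yield the identical final estimate.
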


\begin{proof}
Put 
$$ \tilde {\cal V}^n_{i \to \cdot } = \{ j \neq i : \exists k \in \tilde {\cal V}_{i \to \cdot }^{ n-1} : W_{ k \to j } = 1 \} , n \geq 2 , \tilde {\cal V}_{i \to \cdot }^1 = {\cal V}^1_{i \to \cdot } .$$
The sequence of sets $ \tilde {\cal V}^n_{i \to \cdot } , n \geq 1 , $ equals the original sequence $ {\cal V}^n_{i \to \cdot } , n \geq 1 , $ except that we excluded the choice of $i$ itself. On $ \{ \tau^i > k   \} ,$ clearly $ \bigcup_{ n \le k- 1 }  {\cal V}^n_{i \to \cdot } = \bigcup_{ n \le k- 1 } \tilde {\cal V}^n_{i \to \cdot },$ and we can write
$$ \tilde P( \tau^i >  k ) = \tilde P \left( W_{ j \to i } = 0 \; \; \forall \; j \in \bigcup_{ n \le k- 1 } \tilde  {\cal V}^n_{i \to \cdot }\right) .$$
Since in the definition of $\tilde {\cal V}^n_{i \to \cdot } ,$ no choice $ W_{ \cdot \to i } $ has been made, we can condition with respect to $\bigcup_{ n \le k- 1 } \tilde {\cal V}^n_{i \to \cdot } , $ use the fact that for any $j \in \bigcup_{ n \le k- 1 } \tilde {\cal V}^n_{i \to \cdot }, $ the random variable $W_{ j \to i } $ is independent of $\bigcup_{ n \le k- 1 } \tilde {\cal V}^n_{i \to \cdot } ,$ and obtain the following equality
$$ \tilde P( \tau^i > k ) = \tilde E \left[ (1 - p_N )^{ | \bigcup_{1 \le  n \le k- 1 } \tilde {\cal V}^n_{i \to \cdot } | } \right] .$$
We conclude as follows. We can couple the process $ | \tilde {\cal V}^n_{i \to \cdot } |, n \geq 2 , $ with a classical Galton-Watson process $Z_n , n \geq 2 ,$ starting from $Z_1 = {\cal V}^1_{i \to \cdot } ,$ such that $ | \tilde {\cal V}^n_{i \to \cdot } | \le Z_n $ for all $n \geq 2 .$ The Galton-Watson process has offspring mean $ \mu = (N- 2 ) \frac{ \lambda}{N} .$ Here, the factor $N-2$ comes from the fact that any $j$ has $N- 2 $ choices of choosing arrows $ W_{ j \to \cdot } ,$ since $ j $ itself and $i$ are excluded. 

Therefore, 
$$ \tilde P( \tau^i > k ) = \tilde  E \left[ (1 - p_N )^{ | \bigcup_{ 1 \le n \le k- 1 } \tilde {\cal V}^n_{i \to \cdot } | } \right] \geq \tilde E \left[ (1 - p_N )^{\sum_{n = 1 }^{ k - 1 } Z_n } \right] .$$
Write $ \Sigma_{ k - 1} = Z_1 + \ldots + Z_{k-1}  $ and let $ \tilde E (s^{\Sigma_{k-1}}), s \le 1  ,$ be its moment generating function. Using the convexity of the moment generating function, we have that 
$$ \tilde E (s^{\Sigma_{k-1}}) \geq 1 + \tilde E ( \Sigma_{k-1}) (s- 1 ) .$$
Using that $\tilde E ( Z_1 ) = \frac{N-1}{N} \lambda $ and that the offspring mean equals $\mu,$ the claim follows from
$$ \tilde E ( \Sigma_{k-1}) = \frac{N-1}{N} \lambda \left[1+ \mu +\ldots + \mu^{k-2} \right]\le \lambda + \ldots + \lambda^{ k - 1}  \le (k-1) \lambda^{k-1}  ,$$
since $\mu \le \lambda $ and $\lambda \geq 1.$ 
Hence, evaluating the above lower bound in $s  = 1 - p_N ,$ we obtain
$$  \tilde P( \tau^i > k ) \geq 1 - p_N (k-1) \lambda^{k-1},$$
and therefore, 
$$ \tilde P( \tau^i \le k ) \le p_N (k-1) \lambda^{ k- 1} =  \frac{k-1}{N}  \lambda^k ,$$
since $ p_N = \lambda / N .$ Using that $ \lambda = 1 + \vartheta / N , $ we obtain the assertion. 
\end{proof}

In what follows, $ a_{-k}^{-1} $ denotes the finite sequence $  ( a_{-k}, \ldots , a_{-1} ). $ In particular, the notation  $a_{-l}^{-1} 1 0^{k-1}  $ denotes the sequence given by $ ( a_{-l} , \ldots , a_{-1}, 1, 0 , \ldots , 0 ) .$ We write for short 
$$  p^{(W,i)} ( a | a_{-k}^{-1} ) =  P^W ( X_{k} (i) = a | X_0^{ k-1 }(i ) = a_{-k}^{-1} ) $$
for the transition probability of neuron $i,$ given a fixed choice of synaptic weights $W.$ However, conditionings will be read from the left to the right. In particular, we write
$$  p^{(W,i)} ( a | 0^{k-1} 1 a_{-l}^{-1} ) =  P^W ( X_{k} (i) = a | X_{k-1}( i ) = \ldots = X_1 (i ) = 0 , X_0 (i) = 1 , X_{-l}^{-1} ( i ) =  a_{-l}^{-1} ). $$
The following proposition shows that on the event $ \{ \tau^i > k + l \} ,$ the two transition probabilities $  p^{(W,i)} (1 | 0^{k-1} 1 a_{-l}^{-1} )  $ and $p^{(W,i)} ( 1 | 0^{k-1} 1)$ necessarily coincide.

\begin{prop}\label{prop:4}
For any $ k \geq 1 , l \geq 1 , $ 
$$ \{ p^{(W,i)} (1 | 0^{k-1} 1 a_{-l}^{-1} ) \neq  p^{(W,i)} ( 1 | 0^{k-1} 1) \} \subset \{ \tau^i \le k + l \} .$$
\end{prop}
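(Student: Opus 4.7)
I would argue the contrapositive: if the two conditional probabilities differ for some $a_{-l}^{-1}$, then a directed cycle of length at most $k+l$ passes through $i$ in the graph of $W$, which is precisely $\tau^i\le k+l$.

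The first step is a reduction. Conditioning on $X_0(i)=1$ and $X_1^{k-1}(i)=0^{k-1}$ forces $L_k^i=0$, so by (\ref{eq:transition2}),
$$p^{(W,i)}(1\mid a_{-l}^{-1}10^{k-1})=E^W\!\left[\phi_i\!\left(\sum_{j}W_{j\to i}\sum_{s=0}^{k-1}g_j(k-s)X_s(j)\right)\,\bigg|\,X_{-l}^{k-1}(i)=(a_{-l}^{-1},1,0^{k-1})\right].$$
Since the integrand only involves the random vector $Y=(X_s(j):W_{j\to i}=1,\,s\in[0,k-1])$, it is enough to show that on $\{\tau^i>k+l\}$ the conditional law of $Y$ does not depend on $a_{-l}^{-1}$.

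To track this dependence I would use the standard threshold realization of the process from i.i.d.\ uniforms $(U_t(j))$: $X_t(j)=\mathbf 1\{U_t(j)\le p_{(j,t)}(1\mid X)\}$. Iterating, $X_s(j)$ is a deterministic function of the $U_u(k)$ for $(k,u)$ in the worst-case backward cone $A(j,s)$: the closure of $\{(j,s)\}$ under the two moves $(l,v)\mapsto(l,v')$ with $v'<v$ (self-dependence through the factor $L_v^l$) and $(l,v)\mapsto(l',v')$ with $v'<v$ and $W_{l'\to l}=1$ (direct synaptic influence). If the conditional law of $Y$ actually depends on $a_{-l}^{-1}$, then necessarily $(i,s')\in A(j,s)$ for some parent $j$ of $i$, some $s\in[0,k-1]$, and some $s'\in[-l,-1]$.

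Finally, a chain of moves realizing $(i,s')\in A(j,s)$ visits successive neurons $j=k_0,k_1,\ldots,k_m=i$ whose spatial transitions follow in-edges of the graph, i.e.\ $W_{k_{a+1}\to k_a}=1$ for each $a$. This is a directed path $i=k_m\to k_{m-1}\to\cdots\to k_0=j$ of length $m$, and because every move strictly decreases the time coordinate, $m\le s-s'\le k+l-1$. Closing with the parent edge $j\to i$ yields a directed cycle of length $m+1\le k+l$ through $i$, so $i\in\mathcal V^{m+1}_{i\to\cdot}$ and $\tau^i\le k+l$, as required. The main technical obstacle is making ``$X_s(j)$ depends on $X_{s'}(i)$'' formal: one must define $A(j,s)$ as a worst-case set that ignores realization-dependent cancellations in $L_v^l$, and then check that this does not enlarge the set of spatial transitions, so they still only follow edges of the graph of $W$, which is exactly what the cycle-length bound needs.
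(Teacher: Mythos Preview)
Your proof is correct and follows essentially the same route as the paper: both reduce the question to whether the conditional law of $(X_s(j):j\in{\cal V}_{\cdot\to i},\,0\le s\le k-1)$ given $X_0^{k-1}(i)=10^{k-1}$ changes when one additionally conditions on $X_{-l}^{-1}(i)=a_{-l}^{-1}$, and then argue this cannot happen when $\tau^i>k+l$. The paper carries out the reduction by explicitly summing over the neighbor configurations $z_0^{k-1}(j)$ and then simply asserts the final implication, whereas you make that implication precise via the backward-cone argument and the cycle-length count; your version is in fact more detailed on this last point than the paper's own proof.
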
   

\begin{proof}
Let $W$ be fixed. From now on, since we will work for this fixed choice of $W,$ we will omit the superscript $W$ and write for short $p^i ( a | a_{-k}^{-1} ) $ instead of $ p^{(W,i)} ( a | a_{-k}^{-1} )  .$ We have
\begin{multline*}
 P^W( X_{k}(i) = 1 ,  X_1^{k-1} (i) = 0^{k-1} , X_0 (i) = 1 ,  X_{-l}^{-1} (i) = a_{-l}^{-1}  ) \\
= \sum_{ j \in {\cal V}_i } \sum_{z_{0}^{k-1} (j) \in \{ 0, 1 \}^{k } }P^W( X_{k}(i) = 1 ,  X_1^{k-1} (i) = 0^{k-1} , X_0 (i) = 1 ,  X_{-l}^{-1} (i) = a_{-l}^{-1} ,\\
 X_{0}^{k-1} (j ) = z_{0}^{k-1} (j) , \forall j \in {\cal V}_i )  \\
= \sum_{ j \in {\cal V}_i } \sum_{z_{0}^{k-1} (j) \in \{ 0, 1 \}^{k} } \phi_i \left( \sum_{ j \in {\cal V}_i} \sum_{ s = 0}^{k-1} g_j ( k - s ) z_s (j )  ) \right) \times\\
\times P^W (  X_{0}^{k-1} (j ) = z_{0}^{k-1} (j) , \forall j \in {\cal V}_i , X_{ - l }^{k-1} (i)= a_{-l}^{-1} 1 0^{k-1}) .
\end{multline*}
Thus,
\begin{multline*}
 p^i (1 | 0^{k-1} 1 a_{-l}^{-1} )  
= \sum_{ j \in {\cal V}_i } \sum_{z_{0}^{k-1} (j) \in \{ 0, 1 \}^{k} } \phi_i \left( \sum_{ j \in {\cal V}_i} \sum_{ s = 0}^{k-1} g_j ( k - s ) z_s (j )  ) \right)  \times \\
\times  P^W (  X_{0}^{k-1} (j ) = z_{0}^{k-1} (j) , \forall j \in {\cal V}_i | X_{ - l }^{k-1} = a_{-l}^{-1} 1 0^{k-1} ).
\end{multline*}
The same calculus shows that 
\begin{multline*}
 p^i ( 1 | 0^{k-1} 1) 
= \sum_{ j \in {\cal V}_i } \sum_{z_{0}^{k-1} (j) \in \{ 0, 1 \}^{k} } \phi_i \left( \sum_{ j \in {\cal V}_i} \sum_{ s = 0}^{k-1} g_j ( k - s ) z_s (j )  ) \right)  \cdot \\
\cdot  P^W (  X_{0}^{k-1} (j ) = z_{0}^{k-1} (j) , \forall j \in {\cal V}_i | X_{ 0 }^{k-1}(i) = 1 0^{k-1}  ).
\end{multline*}

This shows that in order to ensure that $ p^i (1 | 0^{k-1} 1 a_{-l}^{-1} ) = p^i ( 1 | 0^{k-1} 1) ,$ it is sufficient to have
\begin{multline}\label{eq:conditionsuff}
 P^W (  X_{0}^{k-1} (j ) = z_{0}^{k-1} (j) , \forall j \in {\cal V}_i | X_{ 0 }^{k-1}(i) =   1 0^{k-1} ) =\\
 =P^W (  X_{0}^{k-1}(j ) = z_{0}^{k-1} (j) , \forall j \in {\cal V}_i | X_{ - l }^{k-1}(i) = a_{-l}^{-1} 1 0^{k-1} ) ,
\end{multline}
for all possible choices of $ z_{0}^{k-1} (j) , j \in {\cal V}_i ,$ which is implied by $ \tau^i > k + l .$  
\end{proof}

{\bf Proof of Theorem \ref{theo:4}}
In this proof, without loss of generality and to simplify the presentation, we suppose that $\tilde \Omega $ is the canonical state space of $W.$ We will use the spontaneous spike times $ \{ n \in \Z : \xi_n (i) = 1 \} $ introduced in the proof of Theorem \ref{theo:3}, in Section \ref{sec:3} above. We recall that these are independent Bernoulli random variables with $ P( \xi_n (i ) = 1 ) = \delta $ for all $ i \in \{ 1, \ldots , N \} , $ for all $ n \in \Z.$ 
Write $ l = \sup  \{ n < S^i_2 : \xi_n (i) = 1 \} $ and 
$ r = \inf \{ n > S^i_2 : \xi_n (i ) = 1 \} .$ Put 
$$ A =  \{ \tau^i > 2k(N)    \}  , $$
where $k(N) $ is such that $ k( N ) \to \infty $ as $ N \to \infty $ and $ k(N) \le N .$  We will fix the choice of $ k( N) $ later. We have for any realization of $W \in  A,$ 
\begin{multline}\label{eq:1}
E^W [ (S^i_3 - S^i_2) (S^i_2 - S^i_1)   ]  \le E^W [ ( r- S^i_2) (S^i_2 -  l) 1_{ \{  l < S^i_2 - k(N) \} \cup \{  r > S^i_2 + k (N) \}  }   ]  
\\
+E^W [ ( S^i_3- S^i_2) (S^i_2 -  S^i_1) 1_{ \{  l \geq  S^i_2 - k(N) ; r \le  S^i_2 + k (N) \}  }   ]  .
\end{multline}
Using that conditionally on $S^i_2 , $ $r- S^i_2$ and $S^i_2 -  l$ are independent and geometrically distributed, we obtain a first upper bound
\begin{equation}\label{eq:2}  E^W [ ( r- S^i_2) (S^i_2 -  l) 1_{ \{  l < S^i_2 - k(N) \} \cup \{  r > S^i_2 + k (N) \}  }   ]   \le \frac{1}{\delta^2} ( k(N) + 2 ) ( 1 - \delta)^{k(N)} . 
\end{equation}
We now consider the second term and use that $\tau^i > 2 k(N) .$ We have
\begin{multline}\label{eq:3}
E^W [ ( S^i_3- S^i_2) (S^i_2 -  S^i_1) 1_{ \{  l \geq  S^i_2 - k(N) ;  r \le  S^i_2 + k (N) \}  }   ]\\
= \sum_t E^W [ ( S^i_3 - t ) ( t - S^i_1 ) 1_{ \{  l \geq  t - k(N) ;  r \le  t + k (N) \}  } 1_{ \{ S^i_2 = t \} }    ]
\\
= \sum_t  E^W \left[  ( t - S^i_1 ) 1_{ \{  l\geq  t - k(N) \}}1_{ \{ S^i_2 = t \} } E^W [  ( S^i_3 - t ) 1_{ \{   r \le  t + k (N) \}  } | {\cal G}_{t- k(N) }^{ t} ] \right],
\end{multline}
where 
$$ {\cal G}_{t- k(N) }^{ t} = \sigma \{ X_s (i ) : t- k(N) \le s \le t \} .$$
Now, since $S^i_3 \le r,$ 
\begin{multline*}
 E^W [  ( S^i_3 - t ) 1_{ \{   r\le  t + k (N) \}  } | {\cal G}_{t- k(N) }^{ t} ] = \\
\sum_{ n = 1 }^{k(N)} n \times P^W ( S^i_3 - t = n ; r \le  t + k (N)  | {\cal G}_{t- k(N) }^{ t}) 
\le \sum_{ n = 1 }^{k(N)} n \times P^W ( S^i_3 - t = n  | {\cal G}_{t- k(N) }^{ t} ) .
\end{multline*}
Notice that 
\begin{multline*}
 P^W ( S^i_3 - t = n  | {\cal G}_{t- k(N) }^{ t} ) \\
= p^i ( 0 | 1 X^{t-1}_{t- k(N) } )p^i ( 0 | 0 1 X^{t-1}_{t- k(N) } ) \times \ldots \times p^i ( 0 |0^{n-2} 1 X^{t-1}_{t- k(N) } ) p^i ( 1 |0^{n-1} 1 X^{t-1}_{t- k(N) } ).
\end{multline*}
Now we use Proposition \ref{prop:4}. Since we are working on $ \{ \tau^i > 2 k(N) \} ,$  we have 
$$ p^i ( 0 | 1 X^{t-1}_{t- k(N) } ) = p^i ( 0 | 1  ), \ldots , p^i ( 1 |0^{n-1} 1 X^{t-1}_{t- k(N) } ) = p^i ( 1 |0^{n-1} 1 ),$$
for all $ n \le k(N) .$ Therefore, 
\begin{multline}\label{eq:4}
 E^W [  ( S^i_3 - t ) 1_{ \{   r \le  t + k (N) \}  } | {\cal G}_{t- k(N) }^{ t} ] \\
\le \sum_{ n = 1 }^{k(N)} n  \times p^i ( 0 | 1 )p^i ( 0 | 0 1  ) \times \ldots \times p^i ( 0 |0^{n-2} 1   ) p^i ( 1 |0^{n-1} 1   ) \\
\le \sum_{ n = 1 }^{\infty } n  \times p^i ( 0 | 1 )p^i ( 0 | 0 1  ) \times \ldots \times p^i ( 0 |0^{n-2} 1   ) p^i ( 1 |0^{n-1} 1   )
\\= E^W ( S^i_3 - S^i_2 ) .
\end{multline}
We conclude that on $A,$ using successively (\ref{eq:1})--(\ref{eq:4}),
$$ E^W [ ( S^i_3 - S^i_2 ) (S^i_2 - S^i_1) ] \le \frac{1}{\delta^2} ( k(N) + 2 ) ( 1 - \delta)^{k(N)} + E^W ( S^i_3 - S^i_2) E^W ( S^i_2 - S^i_1).$$

In a second step, we are seeking for lower bounds. We start with 
\begin{multline}\label{eq:11}
E^W [ (S^i_3 - S^i_2) (S^i_2 - S^i_1)   ]  \geq E^W [ ( S^i_3- S^i_2) (S^i_2 -  S^i_1) 1_{ \{  l \geq  S^i_2 - k(N) ;  r \le  S^i_2 + k (N) \}  }   ] .  
\end{multline}
Then on $\{ S_2^i = t \} ,$
\begin{multline*}
 E^W [  ( S^i_3 - t ) 1_{ \{  r \le  t + k (N) \}  } | {\cal G}_{t- k(N) }^{ t} ] = 
\sum_{ n = 1 }^{k(N)} n \times P^W ( S^i_3 - t = n ; r \le  t + k (N)  | {\cal G}_{t- k(N) }^{ t}) \\
\geq \left( \sum_{ n = 1 }^{k(N)} n \times P^W ( S^i_3 - t = n  | {\cal G}_{t- k(N) }^{ t} ) \right) - k(N)^2 P^W ( r >  t + k (N)  | {\cal G}_{t- k(N) }^{ t}) \\
= \left( \sum_{ n = 1 }^{k(N)} n \times P^W ( S^i_3 - t = n  | {\cal G}_{t- k(N) }^{ t} ) \right)  - k(N)^2  (1 - \delta)^{ k(N) } .
\end{multline*}
Now, on $\{ S_2^i = t \} ,$
\begin{multline*}
 \sum_{ n = 1 }^{k(N)} n \times P^W ( S^i_3 - t = n  | {\cal G}_{t- k(N) }^{ t} ) 
\\ = E^W ( S^i_3 - S^i_2 ; S^i_3 - S^i_2 \le k(N) ) =  E^W (S^i_3 - S^i_2 ) -  E^W ( S^i_3 - S^i_2 ; S^i_3 - S^i_2 >  k(N) ) \\
\geq  E^W (S^i_3 - S^i_2 ) -E^W ( r - S^i_2 ; r- S^i_2 >  k(N) )\\
\geq  E^W (S^i_3 - S^i_2 ) - \frac{1}{\delta} (k(N) + 2 ) (1 - \delta )^{k(N) } .
\end{multline*}
Therefore, for any realization $W \in A,$
$$ E^W [ (S^i_3 - S^i_2 ) ( S^i_2 - S^i_1 ) ] \geq E^W ( S^i_3 - S^i_2 )E^W ( S^i_3 - S^i_2 )   - [ \frac{2}{\delta^2} (k(N) + 2 )  + k(N)^2  ] (1 - \delta)^{ k(N) } .
$$
Putting things together and supposing that $ k(N) + 2 \le k(N)^2 , $ we obtain finally
$$| E^W [ (S^i_3 - S^i_2 ) ( S^i_2 - S^i_1 ) ] -  E^W ( S^i_3 - S^i_2 )E^W ( S^i_3 - S^i_2 ) | \le \frac{3}{\delta^2} k(N)^2 (1 - \delta)^{k(N) } .$$

It remains to find an upper bound for $ \P ( A^c ) .$ Clearly, applying Proposition \ref{prop:tau}, since $k(N) \le N, $ we have
$$ \P( A^c ) \le e^{2\vartheta} \frac{ k(N) }{N}  .$$
It is enough to choose $k(N) = \sqrt{N} $ to conclude the proof. \hfill $\bullet$

\section*{Acknowledgments}

We thank D. Brillinger, B. Cessac, S. Ditlevsen, M. Jara, M. Kelbert, Y. Kohayakawa, C. Landim, R. I. Oliveira, S. Ribeiro, L. Triolo, C. Vargas and N. Vasconcelos for many  discussions on Hawkes processes, random graphs and neural nets at the beginning of this project.

This work is part of USP project ``Mathematics, computation,language and the brain", FAPESP project ``NeuroMat" (grant 2011/51350-6), USP/COFECUB project
``Stochastic systems with interactions of variable range'' and CNPq project ``Stochastic modeling of the brain activity" (grant 480108/2012-9). 
AG is partially supported by a CNPq fellowship (grant 309501/2011-3), AG and EL have been partially supported by the MathAmSud project ``Stochastic structures of large interacting systems" (grant 009/10). EL thanks Numec, USP, for hospitality and support.

\bibliography{biblio-15-01-2013}

\end{document}